\theoremstyle{plain}
\newtheorem{thm}{Theorem}[section]
\newtheorem{conj}[thm]{Conjecture}
\newtheorem{rem}[thm]{Remark}
\newtheorem{lem}[thm]{Lemma}
\newcommand{\field}[1]{\mathbb{#1}}
\newcommand{\Qbar}{{\kern.1ex\overline{\kern-.1ex\Q\kern-.1ex}\kern.1ex}}
\newcommand{\CO}{\mathcal{O}}
\newcommand{\Q}{\field{Q}}
\newcommand{\C}{\field{C}}
\newcommand{\R}{\field{R}}
\newcommand{\bfR}{\mathbf{R}}
\newcommand{\Proj}{\mathbf{P}}
\newcommand{\Z}{\field{Z}}
\newcommand{\A}{\field{A}}
\newcommand{\F}{\field{F}}
\newcommand{\G}{\field{G}}
\newcommand{\ov}{\overline}
\newcommand{\p}{\mathfrak{p}}
\newcommand{\gN}{\mathfrak{N}}
\newcommand{\gc}{\mathfrak{c}}
\DeclareMathOperator{\Gal}{Gal}
\DeclareMathOperator{\GL}{GL}
\DeclareMathOperator{\GSp}{GSp}
\DeclareMathOperator{\Sp}{Sp}
\DeclareMathOperator{\GSO}{GSO}
\DeclareMathOperator{\GO}{GO}
\DeclareMathOperator{\End}{End}
\DeclareMathOperator{\im}{Im}
\DeclareMathOperator{\trace}{{\mathrm{Tr}}}
\DeclareMathOperator{\Frob}{Frob}
\title[Lifts of Bianchi Modular Forms]{Theta Lifts of Bianchi Modular Forms and Applications to Paramodularity}
\author{Tobias Berger}
\address{School of Mathematics and Statistics, University of Sheffield, Hicks Building, Hounsfield Road, Sheffield, S3 7RH, UK }
\email{t.t.berger@sheffield.ac.uk}
\author{Lassina Demb\'el\'e}
\address{Mathematics Institute, Zeeman Building, University of Warwick, Coventry, CV4 7AL, UK}
\email{L.Dembele@warwick.ac.uk}
\author{Ariel Pacetti}
\address{Departamento de Matem\'atica, Universidad de Buenos Aires, Ciudad Universitaria
and IMAS, Buenos Aires, Argentina}
\email{apacetti@dm.uba.ar}
\author{Haluk \c{S}eng\"un}
\address{Mathematics Institute, Zeeman Building, University of Warwick, Coventry, CV4 7AL, UK}
\email{M.H.Sengun@warwick.ac.uk}
\begin{document}
\date{\today}

\thanks{The authors' research is supported by the following grants: EPSRC First Grant EP/K01174X/1 (Berger),
EPSRC CAF Fellowship EP/J002658/1 (Demb\'el\'e), CONICET PIP 2010-2012 and FonCyT BID-PICT 2010-0681 (Pacetti), Marie Curie Intra-European Fellowship (\c{S}eng\"un).}

\maketitle

\begin{abstract}
  We explain how the work of Johnson-Leung and Roberts \cite{JLR} on
  lifting Hilbert modular forms for real quadratic fields to Siegel
  modular forms can be adapted to imaginary quadratic fields. For this
  we use archimedean results from~\cite{HST} and replace the global
  arguments of~\cite{Ro01} by the non-vanishing result of
  Takeda~\cite{Takeda2009}. As an application of our lifting result,
  we exhibit an abelian surface $B$ defined over $\Q$, which is not
  restriction of scalars of an elliptic curve and satisfies the
  Brumer-Kramer Paramodularity Conjecture~\cite{brumer-kramer}.
\end{abstract}

\section{\bf Introduction}
The following is a special case of~\cite[Conjecture~1.4]{brumer-kramer}, known as the Brumer-Kramer
Paramodularity Conjecture. (For definitions and ter\-mi\-no\-lo\-gy, we refer the reader to 
Sections~\ref{sec:bianchi-mod-forms} and~\ref{sec:siegel-mod-forms}.)

\begin{conj}\label{conj:BK} Let $B$ be an abelian surface defined over $\Q$ of conductor $N$ such that $\End_\Q(B) = \Z$. 
Then, there exists a Siegel newform $g$ of genus $2$, weight $2$ and paramodular level $N$ such that $L(B, s) = L(g, s)$. 
Conversely, if $g$ is a Siegel newform of genus $2$, weight $2$ and paramodular level $N$, which is a non-Gritsenko lift 
and whose Hecke eigenvalues are integers, then there exists an abelian surface $B$ defined over $\Q$ such that 
$\End_\Q(B) = \Z$ and $L(g, s) = L(B, s)$. 
\end{conj}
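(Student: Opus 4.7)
The statement is the (currently open) Brumer--Kramer Paramodularity Conjecture, and any realistic ``proof plan'' really amounts to outlining an attack through Galois representations, following the template of the Shimura--Taniyama--Weil correspondence for elliptic curves. The plan is to treat the two directions separately, since each is a paramodular-level analogue of a different deep problem.

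For the forward direction, I would attach to $B$ the compatible system of 4-dimensional $\ell$-adic Galois representations coming from its rational Tate module. The condition $\End_\Q(B) = \Z$ guarantees (by Faltings) that this system takes values in $\GSp_4$ and that the image is ``generic'' in the sense that it is not a symmetric square lift, a theta series, or induced from a quadratic extension. The standard strategy is then a Taylor--Wiles--Kisin style modularity lifting argument: pick an auxiliary prime $\ell$ at which the residual representation $\bar\rho_{B,\ell}$ can be shown to be modular of paramodular level (the ``residual modularity'' input), and then deform to characteristic zero using an $R = \T$ theorem for $\GSp_4$ of paramodular type. Here one would identify the automorphic form produced with a Siegel paramodular newform of weight $2$ and level exactly $N$ via local-global compatibility and a careful level-lowering analysis at the bad primes of $B$.

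For the converse direction, one starts from $g$ and first attaches a Galois representation $\rho_g\colon \Gal(\Qbar/\Q)\to \GSp_4(\Qbar_\ell)$ using the work of Weissauer and others. The hypotheses are designed to force this representation to be of ``abelian surface type'': the non-Gritsenko assumption rules out the endoscopic representations arising from Jacobi/Saito--Kurokawa type lifts (which otherwise would force $B$ to split over an extension or to be a restriction of scalars, contradicting $\End_\Q(B) = \Z$), and the integrality of the Hecke eigenvalues ensures the traces are rational integers so that the compatible system can plausibly come from a motive with coefficients in $\Z$. One would then attempt to realise $\rho_g$ geometrically as the $\ell$-adic Tate module of some abelian surface, matching conductors and $L$-functions prime-by-prime.

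The main obstacle is, predictably, the converse direction: realising a $\GSp_4$-valued Galois representation as coming from an abelian surface is a paramodular analogue of the full Fontaine--Mazur conjecture and is out of reach of current methods in anything like full generality. Even the forward direction requires $\GSp_4$ modularity lifting theorems in regular weight that are substantially harder than their $\GL_2$ counterparts, together with delicate residual modularity input. In practice, therefore, all one can realistically do today is verify the conjecture in explicit cases: produce a candidate Siegel paramodular newform by some concrete construction (for instance, as a theta lift from a Bianchi modular form, as in the present paper), locate a plausible abelian surface $B$ with matching conductor, and then check equality $L(B,s)=L(g,s)$ at enough primes to pin down the match, while separately verifying $\End_\Q(B)=\Z$ and that $g$ is not a Gritsenko lift. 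The application in this paper should be read as exactly such a verification for a single, carefully chosen $B$.
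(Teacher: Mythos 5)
You have correctly recognised that the statement is not a theorem of the paper at all: it is the Brumer--Kramer Paramodularity Conjecture, quoted verbatim from \cite{brumer-kramer}, and the paper neither proves it nor claims to. There is therefore no ``paper proof'' to compare against, and your assessment --- that a general proof would require $\GSp_4$ modularity lifting (forward direction) and a Fontaine--Mazur type geometric realisation (converse direction), both currently out of reach, so that in practice one can only verify instances --- is accurate and matches how the authors position their work.

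One refinement to your last paragraph, where you describe what such a verification looks like. The paper's contribution is \emph{not} a numerical match of $L(B,s)$ and $L(g,s)$ ``at enough primes to pin down the match'' in a heuristic sense: it is a rigorous proof of one new instance of the forward direction. The route is to pass through the imaginary quadratic field $K=\Q(\sqrt{-223})$: they find a genuine Bianchi newform $f$ of weight $2$ and level $(1)$ with $f^\sigma=f^\tau$, locate the abelian surface $A=B\otimes_\Q K$ (with everywhere good reduction and real multiplication by $\Z[\sqrt2]$) on the Humbert surface $Y_-(8)$ using the Elkies--Kumar model, and then prove $L(A,s)=L(f,s)L(f^\tau,s)$ by the Faltings--Serre method: after establishing absolute irreducibility and isomorphism of the residual $2$-adic representations (via class field theory arguments pinning down a unique $S_3$-extension), equality of traces at an explicitly computable \emph{finite} set of primes forces $\rho_A\simeq\rho_f$. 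Paramodularity of $B$ over $\Q$ then follows from their Theorem~\ref{thm:main}, the theta lift of $f$ to a Siegel paramodular newform $g$ of weight $2$ and level $223^2$ with $L(g,s)=L(B,s)$ --- so no $\GSp_4$ modularity lifting theorem over $\Q$ is invoked anywhere. The converse direction of the conjecture is used by the authors only heuristically, to guess that $A$ descends to a surface $B/\Q$; it plays no role in the proof. With that caveat, your proposal is a fair account of both the status of the conjecture and the nature of the paper's evidence for it.
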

In~\cite{brumer-kramer}, there are examples of abelian surfaces of prime conductor. The first few Euler factors 
for each of them are shown to match those of a paramodular form in~\cite{poor-yuen09}. However, none of those 
surfaces has been proved to be modular. 

Let $K$ be a quadratic field, and $E$ an elliptic curve defined over
$K$ which is a {\it non-base change}.  The surface $B_E
=\mathrm{Res}_{K/\Q}(E)$ is an abelian surface defined over $\Q$ with
$\End_\Q(B_E) = \Z$.  When $K$ is real quadratic, recent work
of~\cite{FHS13} shows that there exists a Hilbert newform $f$ of
weight $2$ such that $L(E, s) = L(f, s)$. The form $f$ can be lifted
to a paramodular Siegel cusp form $g$ of genus $2$ and weight $2$
thanks to work of Johnson-Leung and Roberts~\cite{JLR}. A direct consequence
of the construction of their theta lift is that $L(B_E, s) = L(g,
s)$. In other words, the surface $B_E$ is paramodular. So elliptic
curves that are non-base change over real quadratic fields provide a
large supply of abelian surfaces which satisfy Conjecture~\ref{conj:BK}. 

In fact, the above strategy to produce evidence for Conjecture~\ref{conj:BK} was elaborated by Brumer and Kramer themselves. 
They also speculated that further evidence could be gathered by using abelian surfaces $B$ with trivial endomorphism ring 
over $\Q$ such that $\End_{\Qbar}(B) \supsetneq \Z$ (see the paragraph following the statement of~\cite[Conjecture 1.4]{brumer-kramer}).
In~\cite{dk}, Demb\'el\'e and Kumar provide such numerical evidence. They give explicit examples of paramodular abelian surfaces 
$B$ defined over $\Q$ which become of $\GL_2$-type over some real quadratic fields. Like with the surfaces $B_E$ above, 
the Johnson-Leung-Roberts' lift also plays a crucial r\^ole in their work.

The goal of this paper is twofold. First, we show that one can generalise the construction
in~\cite{JLR} to imaginary quadratic fields. For this one needs to replace the theta correspondence between ${\rm GO}(2,2)$ 
and ${\rm GSp}_4$ by the one between ${\rm GO}(3,1)$ and ${\rm GSp}_4$ used by Harris, Taylor and Soudry in their work 
on Galois representations associated to cusp forms for ${\rm GL}_2$ over imaginary quadratic fields (see \cite{HST, taylor_94}). 
Our analysis is then an adaptation of the one in~\cite{Ro01} and~\cite{JLR} using archimedean results from~\cite{HST} and 
the local-global non-vanishing result of Takeda~\cite{Takeda2009}. Our lift can be seen as a type of 
Yoshida lift, although, strictly speaking, Yoshida~\cite{yoshida80} only considered the groups ${\rm O}(4)$ and ${\rm O}(2,2)$.


Second, we combine our lift with explicit computations of Bianchi mo\-du\-lar forms to
exhibit an abelian surface $B$ defined over $\Q$, which satisfies Conjecture~\ref{conj:BK} but is not
a restriction of scalars of an elliptic curve. Because of the difficulties in constructing such
examples (see Section~\ref{sec:abelian-surface}) and the paucity of modularity results for $\GL_2$ over imaginary
quadratic fields, we had to limit ourselves to one example.
However, it should be clear to the reader that our method, which borrows from~\cite{dk}, 
can be used in principle to generate more cases of the conjecture when further
modularity results become available in this case.

As in the real quadratic case, if $E$ is a modular non-base change elliptic curve  over an imaginary quadratic field, 
then the associated surface $B_E$ is paramodular by our lifting result.  We note that there are several examples of 
such elliptic curves in the literature (see~\cite{DGP} and references therein). 

The paper is organised as follows. In Sections~\ref{sec:bianchi-mod-forms}
and~\ref{sec:siegel-mod-forms}, we recall the necessary background on
Bianchi and Siegel modular forms. In Section~\ref{sec:lifting}, we
prove the existence of our theta lift for any Bianchi modular form of even weight $k$ that is 
non-base change. In Section~\ref{sec:paramodularity}, we give
an example of an abelian surface $B$ defined over $\Q$ which
satisfies Conjecture~\ref{conj:BK}. We prove the
modularity of our surface (over an imaginary quadratic field) using the so-called Faltings-Serre method~\cite{serre}.\\

\noindent
{\bf Acknowledgements.} We would like to thank Abhishek Saha and Neil Dummigan for  useful conversations and comments.
We are also grateful to Abhinav Kumar for kindly providing us with a preliminary version of his joint work with Elkies~\cite{ek}, 
and for many helpful email exchanges.

\section{\bf Background on Bianchi modular forms}\label{sec:bianchi-mod-forms}

\subsection{Bianchi modular forms}
Let $K$ be an imaginary quadratic field, and $\CO_K$ its ring of integers. 
Let $\widehat{\Z}$ be the finite ad\`eles of $\Z$. The ad\`eles of $K$ are given by 
$\A_K =\C \times \A_K^f$, where $\widehat{\CO}_K = \CO_K\otimes\widehat{\Z}$ and $\A^f_K= K \otimes \widehat{\Z}$
are the finite ad\`eles of $\CO_K$ and $K$, respectively.

\medskip
Fix an ideal $\gN \subseteq \CO_K$ and define the compact open subgroup 
\begin{align*}
U_0(\gN):=\left\{\gamma\in\GL_2(\widehat{\CO}_K):\gamma\equiv\begin{pmatrix}*&*\\ 0&*\end{pmatrix} \bmod\gN\right\}.
\end{align*}
Set $\mathcal{K}= \mathcal{K}_\infty \times U_0(\gN)$ where 
$\mathcal{K}_\infty=\C^\times {\rm U}(2)$.

Let $k\ge 2$ be an {\em even} integer and consider the $\GL_2(\C)$-representation 
$V_k = \mathrm{Sym}^{2k-2}(\C^2)$. Let $\chi: K^\times \backslash \A_K^\times/\C^\times \widehat{\CO}_K^\times \to \C^\times$ be an unramified Hecke character with trivial infinity type. For $\gamma = \left(\!\begin{smallmatrix}a&b\\c&d \end{smallmatrix}\!\right) \in {\rm GL}_2(\A_K)$ put  $\chi_{\gN}(\gamma)=\prod_{v \mid \gN} \chi_v(d_v)$. We define the space $S_k(\gN, \chi)$ of {\em Bianchi cusp forms} of level $U_0(\gN)$, central character $\chi$ and weight $k$ to be the set of all functions $f:\ \GL_2(\A_K)  \to V_k$ such that 
\begin{enumerate}
\item $f(\gamma g)= f(g)$ for all $\gamma \in \GL_2(K)$,
\item $f(z g)= \chi(z)f(g)$ for all $z \in \A_K^\times$,
\item $f(gu) = \chi_{\gN}(u) f(g) u_\infty$ for all $u=(u_\infty,u_f) \in {\rm U}(2) \times U_0(\gN)$, 
\item $f$, viewed as a function on $\GL_2(\C)$, is an eigenfunction of the complexification (in $\mathfrak{sl}_2(\C) \otimes_\R \C$) of the Casimir operator of $\mathfrak{sl}_2(\C)$ with eigenvalue $(k^2-2k)/8$,
\item for all $g \in \GL_2(\A_K)$, one has 
$$\int_{K\backslash\A_K} f\left (\begin{pmatrix} 1 & u \\ 0 & 1 \end{pmatrix} g \right ) du = 0.$$
\end{enumerate} 
We invite the reader to see~\cite{ghate99} and \cite{Hida94} for details. In the notations of \cite{ghate99}, we take $\mathbf{n}=(k-2)i+ (k-2)c$, $\mathbf{v}=-\frac{k-2}{2}i-\frac{k-2}{2} c$ in $ \Z[\{i,c\}]$, for $i,c: K \hookrightarrow \C$, to ensure that our forms have trivial action by $\C^\times$. 
The space $S_k(\gN, \chi)$ is endowed with the so-called Petersson inner product.

\subsection{Newforms and $L$-series} 
For each $\p \nmid \gN$, write the double coset decomposition 
$$U_0(\gN)\begin{pmatrix}\varpi_\p&0\\0 &1\end{pmatrix} U_0(\gN) = \coprod_{i} x_i U_0(\gN),$$
where $\varpi_\p$ is a uniformiser at $\p\nmid\gN$. We define the Hecke operator
\begin{align*}
T_\p:\,S_k(\gN, \chi) &\to S_k(\gN, \chi)
\end{align*}
by $$T_\p f(g) := \sum_{i} \chi_{\gN}(x_i)^{-1} f(g x_i), \, g \in \GL_2(\A_K).$$
Similarly, one can define the operator $U_\p$ for $\p \mid \gN$. 

We refer to~\cite[Section~1.6]{U98} for the theory of newforms for Bianchi modular forms.
When $f \in S_k(\gN, \chi)$ is a newform, we have
\begin{align*}
T_\p f &= a_\p f\,\,\text{for all}\,\,\p \nmid \gN;\\
U_\p f &= a_\p f\,\,\text{for all}\,\, \p \mid \gN.
\end{align*}
The $L$-series attached to $f$ is then defined by 
$$L(f, s) := \prod_{\p \mid \gN}\left(1 - \frac{a_\p}{\mathrm{N}(\p)^s}\right)^{-1} 
\prod_{\p \nmid \gN}\left(1 - \frac{a_\p}{\mathrm{N}(\p)^s} + \frac{\chi(\p)}{\mathrm{N}(\p)^{2s-1}}\right)^{-1}.$$
The newform $f$ detemines a unique cuspidal irreducible automorphic representation $\pi$ of $\GL_2(\A_K)$ of level $\gN$, 
which admits a restricted tensor product (see~\cite{flath})
$$\pi = \pi_{\infty}\otimes \bigotimes_{\p< \infty }\pi_{\p},$$
such that $\pi_{\infty}$ has $L$-parameter (see Section \ref{sec:lifting})
\begin{align*}
W_{\C}=\C^\times &\to 
{\rm GL}_2(\C): z \mapsto  \begin{pmatrix} (z/|z|)^{k-1}&0\\0&(z/|z|)^{1-k}\end{pmatrix} 
\end{align*} and the central character of $\pi$ is $\chi$.

Conversely, any cuspidal automorphic representation $\pi$ of $\GL_2(\A_K)$ whose infinity component has such an $L$-parameter corresponds to a newform 
$f$ of weight $k$.
\subsection{Connections with Cohomology} 
It is standard to pass to the cohomological framework when working with Bianchi automorphic forms.
In fact, this is the only approach that is currently suitable for the algorithmic methods that were used to
gather the data in Subsection~\ref{sec:abelian-surface} (see~\cite{cremona,rahm-sengun, yasaki}). 
So we now give a quick review of this framework. 

Let $W_n=\mathrm{Sym}^{n}(\C^2) \otimes_\C \overline{\mathrm{Sym}^{n}(\C^2)}$ where the action of $\GL_2(\C)$ on the
second factor is twisted with complex conjugation. Consider the adelic
locally symmetric space
$$Y_0(\gN) = \GL_2(K)\backslash\GL_2(\A_K) / \mathcal{K}.$$
The cohomology spaces 
$$H^i(Y_0(\gN), \mathcal{W}_n),$$
where $\mathcal{W}_n$ is the local system induced by $W_n$, carry a natural Hecke action on them. 
Let $Y_0(\gN)^{BS}$ denote the Borel-Serre compactification of $Y_0(\gN)$. This is a compact space with boundary $\partial Y_0(\gN)^{BS}$, made of finitely many $2$-tori, 
that is homotopy equivalent to $Y_0(\gN)$. We define the {\em cuspidal cohomology} $H^i_{cusp}(Y_0(\gN), \mathcal{W}_n)$ as the kernel of the restriction map
$$H^i(Y_0(\gN)^{BS}, \mathcal{W}_n) \longrightarrow H^i(\partial Y_0(\gN)^{BS}, \mathcal{W}_n).$$
The Hecke action stabilises the cuspidal cohomology.

By the strong approximation theorem, the determinant map induces a canonical bijection
\begin{align*}\label{eq:strong-approx}
\GL_2(K) \backslash \GL_2(\A_K^f)/ U_0(\gN) \simeq K^\times \backslash (\A_K^f)^\times/\widehat{\CO}_K^\times \simeq \mathrm{Cl}(K),
\end{align*}
where $\mathrm{Cl}(K)$ is the class group of $K$. 
Let $\gc_i$, $1 \le i \le h$, be a complete set of representatives for the classes in $\mathrm{Cl}(K)$.
For each $i$, let $t_i$ be a finite id\`ele which generates $\gc_i$ and set 
$\Gamma_0(\gc_i, \gN) = \GL_2(K) \cap \left(\GL_2(\C) x_i U_0(\gN)x_i^{-1}\right)$
where $x_i = \left(\!\begin{smallmatrix} 1 & 0 \\ 0 & t_i \end{smallmatrix}\!\right)$. Then, we have 
\begin{align*}
\Gamma_0(\gc_i, \gN) &=\left\{\begin{pmatrix} a &b \\ c& d \end{pmatrix} \in \begin{pmatrix}\CO_K & \gc_i^{-1}\\
\gc_i\gN & \CO_K \end{pmatrix}: ad - bc \in \CO_K^{\times}\right\},
\end{align*} 
and 
$$ Y_0(\gN) = \bigsqcup_{j=1}^h \Gamma_0(\gc_j, \gN) \backslash \mathcal{H}_3$$
where $\mathcal{H}_3 \simeq \GL_2(\C) / \mathcal{K}_\infty$ is the hyperbolic 3-space. It follows that the co\-ho\-mo\-lo\-gy of 
the adelic space decomposes as
$$H^i(Y_0(\gN), \mathcal{W}_{n}) = \bigoplus_{j=1}^h H^i(\Gamma_0(\gc_j, \gN) \backslash \mathcal{H}_3, \mathcal{W}_{n}).$$
The cohomology spaces on the right hand side lend themselves very well to explicit machine calculations. We refer 
to~\cite{rahm-sengun, yasaki} for more details on this, and on the Bianchi newforms related to the examples in this paper. 

We recall that the diagonal embedding 
$$K^\times \backslash (\A_K^f)^\times/\widehat{\CO}_K^\times \hookrightarrow \GL_2(K) \backslash \GL_2(\A_K^f)/ U_0(\gN),$$
induces an action of $\mathrm{Cl}(K)$ on the $H^i_{cusp}(Y_0(\gN), \mathcal{W}_{n})$. 
This action is compatible with the Hecke action as it is via the diamond operators. Therefore, these cohomology spaces decompose
accordingly. 

For $k\ge 2$ even, the Generalised Eichler-Shimura Isomorphism (see~\cite{harder}, in our setting \cite[Proposition~3.1]{Hida94}) says that  
\begin{equation} \label{eq:ESH}
H^1_{cusp}(Y_0(\gN), \mathcal{W}_{k-2}) \simeq \bigoplus_{\chi} S_{k}(\gN, \chi) \simeq H^2_{cusp}(Y_0(\gN), \mathcal{W}_{k-2}) 
\end{equation}
as Hecke modules, where $\chi$ runs over all Hecke characters of trivial infinity type that are unramified everywhere (i.e. characters of $\mathrm{Cl}(K)$).

\begin{rem}\label{rem:central-character}\rm
Our Theorem~\ref{thm:main} only applies to the newforms in $S_k(\gN, \mathbf{1})$, i.e. the newforms
whose central character is trivial. We refer to~\cite{rahm-sengun, yasaki} and~\cite[Sections 2.4 and 2.5]{Lingham} 
for details on how the cohomology corresponding to this space can be computed.
\end{rem}

\section{\bf Background on Siegel modular forms}\label{sec:siegel-mod-forms}

 \subsection{Siegel modular forms} 
 Let $$J = \begin{pmatrix}0&\mathbf{1}_2\\ -\mathbf{1}_2&0\end{pmatrix} \in M_4(\Z).$$
 We recall that the symplectic group of genus $2$ is the $\Q$-algebraic group $\GSp_4$
 defined by setting
 $$\GSp_4(R) := \left\{\gamma \in \GL_4(R): \gamma^t J
 \gamma =\nu(\gamma) J,\,\,\nu(\gamma) \in R^\times \right\},$$
 for any $\Q$-algebra $R$. The map $\nu:\,\GSp_4 \to \G_m$ is called the similitude factor,
 and its kernel is the symplectic group $\Sp_4$.  
 
 Let $\GSp_4(\R)^+$ be the subgroup of $\GSp_4(\R)$ which consists of the matrices
 $\gamma$ such that $\nu(\gamma)>0$; also let 
 $$\mathfrak{H}_2:= \left\{Z \in M_2(\C): Z = Z^t\,\text{and}\,\mathrm{Im}(Z)>0\right\}$$
 be the Siegel upper half plane of degree $2$. 
 We recall that $\GSp_4(\R)^+$ acts on $\mathfrak{H}_2$ by 
 $$\gamma\cdot Z = (AZ+B)(CZ + D)^{-1},\,\gamma:=\begin{pmatrix}A & B\\ C& D\end{pmatrix}.$$
 
 Let $k\ge 2$ be an even integer, and set $V_{(k,2)}={\rm Sym}^{k-2}(\C^2) \otimes {\rm det}^2(\C^2)$;
 this is a $\GL_2(\C)$-representation which we denote by $\rho_{(k,2)}$. The group 
 $\GSp_4(\R)^{+}$ acts on the space of functions $F: \mathfrak{H}_2 \to V_{(k,2)}$ 
 by $$(F|_{(k,2)}\gamma)(Z)=\nu(\gamma)^{\frac{k+2}{2}} \rho_{(k,2)}((CZ+D)^{-1}) F(\gamma \cdot Z).$$

 
 We fix a positive integer $N$, and consider the paramodular group
 $$\Gamma^{p}(N) := \begin{pmatrix}\Z & N\Z & \Z & \Z\\ \Z & \Z & \Z & N^{-1}\Z\\ 
 \Z & N\Z & \Z & \Z \\ N\Z & N\Z & N\Z & \Z\end{pmatrix} \cap \Sp_4(\Q).$$
A {\em Siegel modular form} of genus $2$, weight $(k,2)$ and {\em paramodular} level $N$ is a holomorphic function 
$F:\,\mathfrak{H}_2 \to V_{(k,2)}$ such that $F|_{(k,2)}\gamma = F$ for all $\gamma \in \Gamma^p(N)$. 

Let $F$ be a Siegel modular form of genus $2$, weight $(k,2)$ and paramodular level $N$. Then, we see that
$$F(Z + \mathbf{1}_2) = F(Z),\,\,\text{for all}\,\, Z \in \mathfrak{H}_2.$$
By the Koecher principle~\cite{VDG},  
 $F$ admits a Fourier expansion of the form
$$F(Z) = \sum_{Q\ge 0} a_Q e^{2\pi i \mathrm{Tr}(QZ)},$$
where $Q$ runs over all the $2\times 2$ symmetric matrices in $M_2(\Q)$
that are positive semi-definite. We say that $F$ is a {\em cusp form} if, for all $\gamma \in \GSp_4(\Q)$,
we have 
$$(F|_{(k,2)}\gamma)(Z) = \sum_{Q > 0} a_Q^\gamma e^{2\pi i \mathrm{Tr}(QZ)}.$$
We denote by $S_{(k,2)}(\Gamma^p(N))$ the space
of paramodular Siegel cusp forms of level $N$ and weight $(k,2)$. 

\subsection{Newforms and $L$-series}
We consider the double coset decompositions 
\begin{align*}
\Gamma^p(N)\begin{pmatrix} 1 & & & \\ & 1 & & \\ & & p & \\ & & & p\end{pmatrix}\Gamma^p(N) &= \coprod_{i}\Gamma^p(N) h_i;\\
\Gamma^p(N)\begin{pmatrix} p & & & \\ & 1 & & \\ & & p & \\ & & & p^2\end{pmatrix}\Gamma^p(N) &= \coprod_{i}\Gamma^p(N) h_i';
\end{align*}
and following \cite[p. 546]{JLR} define the Hecke operators
\begin{align*}
 T(p) F&= p^{\frac{k-4}{2}}\sum_i F |_{(k,2)}h_i\\
 T(1, p, p, p^2)F&=p^{k-4}\sum_i F|_{(k,2)}h_i'.
 \end{align*}
(As \cite{JLR} note $T(1,p,p,p^2)$ agrees with the classical $T(p^2)$ for $p \nmid N$. We also note that our Hecke 
operators are scaled so as to match the definition in~\cite[p.164]{Arakawa83}.) We refer to \cite[p.547]{JLR} or~\cite{RS07} 
for the definition of the operators $U_p$ for $p\mid N$.


For the paramodular group $\Gamma^p(N)$, the theory of newforms developed in~\cite{RS2, RS07} 
for scalar weights carries over to the vector-valued setting. The old subspace is generated by the images 
of the level-raising maps of \cite{RS2}. One then defines the new subspace 
$S_{(k,2)}^{\rm\tiny new}(\Gamma^p(N))$ to be its or\-tho\-go\-nal complement with respect to the 
Petersson inner product for vector-valued forms given in~\cite{Arakawa83}.

Let $F \in S_{(k,2)}^{\rm\tiny new}(\Gamma^p(N))$ be a newform such that 
for all $p$
$$T(p)F=\lambda_p F,\,\, T(1, p, p, p^2)F=\mu_p F\,\,\text{and}\,\,U_p F =\epsilon_p F$$ for $\lambda_p,\mu_p,\epsilon_p\in \overline{\Q}$. 
By work of~\cite{AsgariSchmidt}, we can associate an automorphic representation 
$\Pi$ to $F$ which admits a restricted tensor product
$$\Pi = \Pi_\infty \otimes \bigotimes_{p<\infty} \Pi_p.$$
The $L$-series attached to $F$ is defined by
$$L(s, F) := \prod_{p} L_p(s, F),$$ 
where the local Euler factors $L_p(s,F)$ are obtained as follows:
 \begin{enumerate}
 \item for ${\rm val}_p(N)=0$, we use the classical Euler factor in \cite[p.173]{Arakawa83}:
 $$L(s,F)^{-1}=1-\lambda_{p}p^{-s}+(p \mu_{p} + p^{k-1}+p^{k-3})p^{-2s}-p^{k-1}\lambda_{p}p^{-3s}+p^{2k-2}p^{-4s};$$
 \item for ${\rm val}_p(N)=1$, we let
 $$L(s,F)^{-1} = 1-(\lambda_{p}+p^{k/2-2} \epsilon_{p})p^{-s}+(p \mu_{p} + p^{k-1})p^{-2s}+ \epsilon_{p} p^{3k/2-2}p^{-3s};$$
 \item for ${\rm val}_p(N)\geq 2$, we let
 $$L_p(s,F)^{-1}=1-\lambda_{p}p^{-s}+(p \mu_{p} + p^{k-1})p^{-2s}.$$
 \end{enumerate}
For ${\rm val}_p(N) \geq 1$ these definitions are motivated by the results of \cite{RS07} and the definitions in \cite{JLR}.

\section{\bf Theta lifts of Bianchi modular forms}\label{sec:lifting}

We briefly recall the definition of $L$-parameters following \cite[Section~4.1]{Saha2013}. 
Let $E$ be a number field, $v$ a place of $E$, and $E_v$ the completion of $E$ at $v$. 
Let $G$ be the group $\GL_2$ or ${\rm GSp}_{4}$. Then, the 
local Langlands correspondence is known for $G$ (see \cite{Knapp, BushnellHenniart, GT2011}). 
It yields a corresponding finite-to-one surjective map
$$L: \Pi(G(E_v)) \to \Phi(G(E_v)),$$
where 
\begin{itemize}
\item $\Pi(G(E_v))$ is the set of isomorphism classes of irreducible admissible representations of $G(E_v)$;
\item $\Phi(G(E_v))$ is the set of $L$-parameters for $G(E_v)$, i.e. the set of isomorphism classes 
of admissible homomorphisms $\phi: W_{E_v}' \to {}^LG^0$, 
where $W_{E_v}'$ is the Weil-Deligne group of $E_v$ and ${}^LG^0$ the dual group of $G$ (which equals $G(\C)$ in our cases).
\end{itemize}
For any irreducible admissible representation $\pi_v$ of $G(E_v)$, we call $L(\pi_v)$ the $L$-parameter of $\pi_v$.  

\begin{thm} \label{thm:main}
Let $K/\Q$ be an imaginary quadratic field of discriminant $D$. 
Let $\CO_K$ be the ring of  integers of $K$, and $\gN \subseteq \CO_K$ an ideal.
Let $\pi$ be a tempered cuspidal irreducible automorphic representation of ${\rm GL}_2(\A_K)$ 
of level $\gN$ and trivial central character such that $\pi_{\infty}$ has L-parameter 
\begin{align*}
W_{\C}=\C^\times &\to 
{\rm GL}_2(\C): z \mapsto  \begin{pmatrix} (z/|z|)^{k-1}&0\\0&(z/|z|)^{1-k}\end{pmatrix}
\end{align*} for some $k \in \Z, k \geq 2$ even.  Assume that $\pi$ is
not Galois-invariant. Then there exists an irreducible cuspidal
representation $\Pi=\bigotimes_v' \Pi_v$ of ${\rm GSp}_4(\A_{\Q})$
with trivial central character such that
\begin{enumerate}
\item $\Pi_v$ is generic for all finite $v$;
\item $\Pi_{\infty}$ is a holomorphic limit of discrete series of Harish-Chandra weight $(k-1,0)$;
\item the following equality of $L$-parameters holds for all places $v$: 
\begin{equation} \label{Lparam} L(\Pi_v)= \bigoplus_{w \mid v} {\rm Ind}_{W'_{K_w}}^{W'_{\Q_v}}L(\pi_w).
\end{equation} 
\end{enumerate}
Consequently, there exists a Siegel newform $F$ of weight $(k,2)$ and paramodular level  $N=D^2 \mathrm{N}_{K/\Q}(\gN)$ 
with Hecke eigenvalues, epsilon factor and (spinor) $L$-function determined explicitly by $\pi$ (and described in the proof below).
\end{thm}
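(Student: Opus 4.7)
The plan is to mimic the strategy of \cite{JLR}, replacing the theta correspondence for $(\GO(2,2), \GSp_4)$ by that for $(\GO(3,1), \GSp_4)$, using \cite{HST} at the archimedean place and \cite{Takeda2009} for the global non-vanishing. Concretely, let $V$ be the four-dimensional $\Q$-quadratic space of $2\times 2$ Hermitian matrices over $K$ equipped with the determinant form; over $\R$ this has signature $(3,1)$, so $\GO(V)\cong \GO(3,1)$. The accidental isomorphism
\[
\mathrm{GSO}(V) \;\cong\; (\mathrm{Res}_{K/\Q}\GL_2 \times \mathbb{G}_m)/\mathbb{G}_m,
\]
arising from the action $g\cdot X = gXg^{*}$, transports $\pi$ to an automorphic representation $\sigma^\circ$ of $\mathrm{GSO}(V)(\A)$, which I would then extend to a representation $\sigma$ of $\GO(V)(\A)$. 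Because $\pi$ is not Galois-invariant, $\sigma^\circ$ admits exactly two such extensions $\sigma^{\pm}$, and I would select the one yielding a non-zero theta lift.

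Next I would form the global theta lift $\Pi = \Theta(\sigma)$ to $\GSp_4(\A_{\Q})$. The central character of $\Pi$ is trivial because that of $\pi$ is. Cuspidality follows from the standard tower argument: any non-cuspidal contribution would force $\pi$ to descend from $\GO(1,1)$ or $\GO(2)$, making $\pi$ Galois-invariant, contradicting the hypothesis. Non-vanishing is the crux, and it is precisely here that Takeda's local-global theorem \cite{Takeda2009} substitutes for the global argument of \cite{Ro01}: the global theta lift is non-zero as soon as every local theta lift $\theta(\sigma_v)$ is non-zero, and the latter is known via Howe duality at the finite places and via the archimedean analysis of \cite{HST}.

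To pin down the local components of $\Pi$, at infinity I would quote \cite{HST}, which shows that the theta lift of the discrete series $\pi_\infty$ of the stated $L$-parameter is the holomorphic limit of discrete series of Harish-Chandra parameter $(k-1,0)$, giving (ii). At a finite place $v$, the local theta correspondence combined with the accidental isomorphism above produces the $L$-parameter of $\Pi_v$ from that of $\sigma_v$; unwinding the isomorphism yields exactly the identity \eqref{Lparam}. Genericity of $\Pi_v$ (part (i)) is then a consequence of Takeda's results, which locate the image of the correspondence inside the generic (Whittaker) spectrum. The cuspidal automorphic representation $\Pi$ then corresponds, via \cite{AsgariSchmidt} together with the description recalled in Section~\ref{sec:siegel-mod-forms}, to a paramodular Siegel newform $F$ of weight $(k,2)$, and \eqref{Lparam} determines its spinor $L$-function and Hecke eigenvalues place by place. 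The conductor is the Artin conductor of \eqref{Lparam}: at primes $p \nmid D\,\mathrm{N}_{K/\Q}(\gN)$ the parameter is unramified; at $p\mid\gN$ the conductor-discriminant formula $\mathfrak{a}(\mathrm{Ind}\,\rho)=\mathfrak{d}_{K_w/\Q_v}^{\dim\rho}\,N_{K_w/\Q_v}(\mathfrak{a}(\rho))$ contributes $\mathrm{N}_{K/\Q}(\gN)_p$; and at each $p\mid D$ the ramification of $K/\Q$ contributes an extra factor $p^{2}$. Summing yields $N=D^{2}\mathrm{N}_{K/\Q}(\gN)$.

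The main obstacle is clearly the global non-vanishing: Roberts's orthogonal-side argument does not transfer from signature $(2,2)$ to $(3,1)$, so Takeda's theorem is indispensable here. A secondary but delicate task is the explicit computation of the local theta correspondence at the primes dividing $\gN$ and at the primes ramified in $K/\Q$, which is needed both to establish the precise conductor formula and to verify the genericity of every $\Pi_v$.
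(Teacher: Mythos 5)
Your strategy coincides with the paper's: transfer $\pi$ to $\GSO(3,1)$ via the Hermitian-matrix space, extend to $\GO(3,1)$, apply the theta correspondence to $\GSp_4$ with the archimedean analysis of \cite{HST} and Takeda's local--global non-vanishing in place of Roberts' global argument, and then extract the paramodular newform via \cite{RS07} and the local computations of \cite{JLR}. The genuine gap is in your treatment of the extension from $\GSO$ to $\GO$. The extension is chosen place by place, not globally as one of two candidates, and your selection rule (``the one yielding a non-zero theta lift'') cannot deliver conclusion (b): at the archimedean place both extensions of $\sigma_\infty$ can have non-vanishing local lifts, but they correspond to different members of the archimedean packet --- one to the generic (large) limit of discrete series, the other to the holomorphic one. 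The paper takes Roberts' extensions $\sigma_v^+$ at every finite place, so that $\theta(\sigma_v^+)^{\vee}$ is the \emph{unique generic} representation with the parameter in (\ref{Lparam}) --- this local result of \cite{Ro01}, not Takeda, is what gives conclusion (a) --- and the specific extension $\sigma_\infty^-=(\pi_{n,s},\mathbf{1},-1)$ of \cite{HST} at infinity, which is what forces $\Pi_\infty$ to be the holomorphic limit of discrete series of Harish-Chandra parameter $(k-1,0)$ (for $k=2$ one also needs the extension of \cite[Corollary~3]{HST} to $n=1$). Having made this mixed choice, it is not automatic that $\bigotimes_{v<\infty}\sigma_v^+\otimes\sigma_\infty^-$ occurs in the space of automorphic forms on $\GO(X,\A_\Q)$; this is precisely where the paper invokes \cite[Proposition~5.4]{Takeda2009}, and it is a second point (besides non-vanishing) where the global arguments of \cite{Ro01} do not transfer to signature $(3,1)$. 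Your proposal treats this automorphy as given.

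The remaining steps are in line with the paper: cuspidality does follow from $\pi\not\cong\pi^c$ (the paper cites \cite[Theorem~1.3(2)]{Takeda2009} together with \cite[Lemma~5]{HST}, which formalise your tower heuristic), the identification $\Pi_v=\theta(\hat{\sigma}_v)^{\vee}$ at all places rests on \cite[Lemmas~4.1 and 4.2]{Takeda2009} and \cite{Ro01}, and your conductor count agrees with the level $D^2\mathrm{N}_{K/\Q}(\gN)$ obtained in the paper from the local calculations of \cite{JLR} and the paramodular newform theory of \cite{RS07} --- noting that the existence of the paramodular newvector itself depends on the genericity in (a), so the two issues above have to be settled first.
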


\begin{rem} \rm
The $L$-parameters for ${\rm GSp}_4(\Q_v)$ on the right hand side of (\ref{Lparam}) are defined as follows:
\begin{enumerate}
\item[(i)]
If $v$ is split then as in \cite{JLR} (5), $L(\pi_w) \oplus L(\pi_{\ov{w}})(x) = \left(\begin{smallmatrix} a_1 & & b_1 & \\
& a_2 & & b_2\\
c_1 & & d_1 & \\
 & c_2 & & d_2
\end{smallmatrix}\right)$, where $L(\pi_w)(x) = \left(\begin{smallmatrix} a_1 & b_1 \\ c_1 & d_1\end{smallmatrix} \right)$ and $L(\pi_{\ov{w}})(x) = \left(\begin{smallmatrix}a_2 & b_2 \\ c_2 & d_2\end{smallmatrix} \right)$. 
\item[(ii)] If $v$ is non-split and finite, as in \cite{JLR} (6), let $g_0 \in W'_{K_w}
  \backslash W'_{\Q_v}$ be non-trivial. Then ${\rm
    Ind}_{W'_{K_w}}^{W'_{\Q_v}}L(\pi_w)=\varphi(K_w, \pi_w,
  \mathbf{1})$, where
  \begin{itemize}
  \item If $y \in W'_{K_w}$, $\varphi(K_w, \pi_w, \mathbf{1})(y) = L(\pi_w)(y) \oplus L(\pi_w)(g_0 y g_0^{-1})$.
  \item $\varphi(K_w,\pi_w,\mathbf{1})(g_0) = \left(\begin{smallmatrix} 
& 1 & & \\
a_0 & & b_0 & \\
& & & 1\\
c_0 & & d_0 & \end{smallmatrix}\right)$, for $L(\pi_w)(g_0^2)=\left(\begin{smallmatrix}a_0 & b_0\\ c_0 & d_0\end{smallmatrix} \right)$.
  \end{itemize}

\item[(iii)] If $v=\infty$ then ${\rm Ind}_{W'_{K_w}}^{W'_{\Q_v}}L(\pi_w)=\varphi(K_w, \pi_w, {\rm sgn})$ as defined in \cite{JLR} (6) (definition extended to the archimedean case). This $L$-parameter  $L(\Pi_{\infty}):W'_{\bfR}=\C^\times \cup \C^\times j \to {\rm GSp}_4(\C)$ (where $j^2=-1$ and $jzj^{-1}=\ov{z}$ for $z \in \C^\times$) can also be described explicitely 
as follows: 
\begin{align*}
j &\mapsto \begin{pmatrix} &&&1\\&&1&\\&(-1)^{k-1}&&\\(-1)^{k-1}&&&\end{pmatrix},\\ \\
z &\mapsto \begin{pmatrix} (z/|z|)^{k-1}&&&\\ &  (z/|z|)^{k-1}&&\\&& (z/|z|)^{1-k}&\\&&& (z/|z|)^{1-k} \end{pmatrix},\,\,z \in \C^\times.
\end{align*}
\end{enumerate}

\end{rem}

\begin{rem}\rm
  For odd weights $k$ one still has a lift to a cuspidal automorphic
  representation $\Pi$ of ${\rm GSp}_4$ with central character given
  by the quadratic character $\omega_{K/\Q}$ corresponding to $K$ such
  that conditions (a) and (b) in the theorem are
  satisfied. (\cite[Lemma~12]{HST} also gives a lift with trivial
  central character, but $\Pi_{\infty}$ must then be generic of
  highest weight $(k,1)$.) We also note that one can, in fact, lift any non-Galois-invariant $\pi$ with cyclotomic central character  to an irreducible cuspidal representation of ${\rm GSp}_4(\A_{\Q})$ (as discussed in \cite{HST} and \cite{Takeda2009}). However, the local calculations of
  \cite{JLR} and the paramodular newform theory of \cite{RS07} apply
  only for trivial central character, which is the reason why we
  exclude odd weights in the theorem and impose the condition of a trivial central character.
\end{rem}

\begin{proof}
The strategy of the proof is very similar to that of \cite{JLR}, but replaces the theta correspondence between ${\rm GO}(2,2)$ and ${\rm GSp}_4$ by that between ${\rm GO}(3,1)$ and ${\rm GSp}_4$. 
There are four steps in the construction of the lift, which are outlined as follows:

\begin{enumerate}
\item As in \cite[Section 1]{HST}, $\pi$ gives rise to an automorphic representation $\sigma=(\pi, \mathbf{1})$ for the identity component ${\rm GSO}(3,1)$ of ${\rm GO}(3,1)$.
\item Choosing suitable extensions of the local components of $\sigma$ promotes this to a representation $\hat{\sigma}$ of ${\rm GO}(3,1)$. (In this step we follow \cite{Ro01} rather than \cite{HST} at the non-archimedean places.)
\item As in \cite{HST} (but using the non-vanishing result of \cite{Takeda2009}) we then use the theta correspondence between ${\rm GO}(3,1)$ and ${\rm GSp}_4$ to lift $\hat{\sigma}$ to the automorphic representation $\Pi$ of ${\rm GSp}_4$ described in the theorem.
\item To produce the paramodular Siegel modular form, one takes the automorphic form $\Phi=\bigotimes \Phi_v$, where $\Phi_v \in \Pi_v$ for $v \nmid \infty$ are the paramodular newform vectors defined by Roberts and Schmidt \cite{RS07}. By referring to the classical treatment in \cite{Arakawa83}, we transfer the local non-archimedean  calculations of Hecke eigenvalues, epsilon and $L$-factors of $\Pi$  in \cite{JLR} to those of the corresponding vector-valued Siegel modular form on $\mathfrak{H}_2$.
\end{enumerate}
\medskip

We now give precise details for each of these steps:

(a) Let $X=\{A \in M_{2}(K): \ov{A}^t=A\}$ be the space of $2\times 2$ Hermitian matrices over $K$ with quadratic form given by $-{\rm det}$. By~\cite[Proposition 1]{HST}, $\sigma=(\pi,\mathbf{1})$ defines an irreducible tempered cuspidal automorphic representation of $\GSO(X,\A_{\Q})\cong ({\rm GL}_2(\A_K) \times \A_{\Q}^*)/\{(z {\rm Id}_2, {\rm N}_{K/\Q} z^{-1}), z \in \A_K^*\}$. 

\medskip

Before we come to step (b) we review some details of the theta correspondence between $\GO(X,\A_{\Q})$ and ${\rm GSp}_4(\A_{\Q})$: Following \cite{HST, Ro01, Takeda2009}, we consider the extension of the Weil representation for $\Sp_4 \times \mathrm{O}(X)$ to the group $R=\{(g,h) \in {\rm GSp}_4 \times \GO(X): \nu(g)\nu(h)=1\}$ and denote this representation by $\omega$. (As explained in  \cite[Remark~4.3]{Takeda2009}, there is a difference in the definition of $R$ in~\cite{HST, Ro01}. But, since we are working with trivial central characters, this does not matter here.) 

Let $G$ be a reductive group over $\Q$, and $v$ a place of $\Q$. We denote by ${\rm Irr}(G(\Q_v))$ the collection of equivalence classes of  irreducible smooth admissible representations of $G(\Q_v)$. Let $v \nmid \infty$, $\tau_v \in {\rm Irr}(\GO(X,\Q_v))$  and $\Pi_v \in {\rm Irr}({\rm GSp}_4(\Q_v))$. We say that $\tau_v$ and $\Pi_v$ correspond (or that $\tau_v$ corresponds to $\Pi_v$) if there is a non-zero $R$-homomorphism from $\omega$ to $\Pi_v \otimes \tau_v$. If $\tau_v$ is tempered and corresponds to $\Pi_v$, then $\Pi_v$ is unique by~\cite[Lemma~4.1]{Takeda2009}. In that case, we denote $\Pi_v$ by $\theta(\tau_v)$. For $v = \infty$, we refer the reader to~\cite[Lemma~4.2]{Takeda2009}
for the appropriate definitions.

\medskip

(b) For all finite places $v=p$ of $\Q$, Roberts \cite[pp.277/278]{Ro01} defines extensions of $\sigma_v$ (certain subrepresentations of ${\rm Ind}_{\GSO}^{\GO} \sigma_v$) to representations $\sigma_v^+$ of $\GO(X,\Q_p)$ such that $\theta(\sigma_v^+)^{\vee}$ is the unique generic representation of ${\rm GSp}_4(\Q_p)$ with $L$-parameter satisfying (\ref{Lparam}). 

At $v=\infty$ we define $\sigma^-_v$ to be the representation denoted by $(\pi_{n,s}, \epsilon, \delta)$ for $n=k-1$, $s=1-k$, $\epsilon=\mathbf{1}$ and $\delta=-1$ in~\cite[p.394]{HST}. 
By \cite[Lemma~12]{HST} (see also \cite[Proposition 6.5(2)]{Takeda2011}) and \cite[Corollary 3]{HST} (which extends to $n=1$),
we know that $\sigma^-_v$  corresponds to a holomorphic limit of discrete series representation $\Pi_{\infty}$ of ${\rm GSp}_4(\bfR)$ with Harish-Chandra parameter $(k-1,0)$ whose $L$-parameter satisfies (\ref{Lparam}).

By \cite[Proposition~5.4]{Takeda2009}, we obtain an irreducible cuspidal automorphic representation of ${\rm GO}(X,\A_{\Q})$ by setting
$$\hat{\sigma}:=\bigotimes_{v<\infty} \sigma^+_v \otimes \sigma^-_{\infty}.$$ 

(c) We now consider the global theta lift $\Theta(V_{\hat{\sigma}})$, which is the space generated by the ${\rm GSp}_4(\A_{\Q})$ automorphic forms $\theta(f; \varphi)$ for all $f \in V_{\hat{\sigma}}$ and $\varphi \in S(X(\A_{\Q})^2)$. (For details of this definition we refer to \cite[Section 5]{Ro01} and \cite[p.11]{Takeda2009}.) 

We know by the local non-vanishing and \cite[Theorem~1.2]{Takeda2009} that $\Theta(V_{\hat{\sigma}})  \neq 0$. Since $\pi \not \cong \pi^c$ we also know that $\Theta(V_{\hat{\sigma}})$ occurs  in the space of cuspforms by Takeda~\cite[Theorem~1.3(2)]{Takeda2009} and \cite[Lemma~5]{HST}. Now let $\Pi$ be an irreducible quotient of $\Theta(V_{\hat{\sigma}})$. Then the representations $\Pi_v$ and $\hat{\sigma}_v^{\vee}$ correspond for all places $v$,  so by~\cite[Lemmas 4.1 and 4.2]{Takeda2009} and \cite[Theorem 1.8]{Ro01} we get $\Pi_v=\theta(\hat{\sigma}^{\vee}_v)=\theta(\hat{\sigma}_v)^{\vee}$ (where the last equality holds by \cite[Proposition~1.10]{Ro01}).

\medskip

(d) The existence of the Siegel paramodular form  is now proved exactly as in \cite{JLR}, but using the argument from \cite[Section~3.1]{SS2011} for defining the vector-valued Siegel modular form $F$. 

The Hecke eigenvalues, epsilon and $L$-factors for the finite part of the automorphic representation $\Pi$ are identical to those in the main theorem of \cite{JLR}. (Note that \cite{JLR} uses the notations $\pi_0$ and $\pi$ instead of our $\pi$ and $\Pi$.) 


 To match the classical spinor $L$-factor of \cite[p.173]{Arakawa83} at unramified places, we see that the shift in the argument of the Euler factors of the local representations $\Pi_v$ given in~\cite[Proposition 4.2]{JLR} to those of $F$ is given by $s \mapsto s - \frac{k-1}{2}$. By the calculations in \cite{JLR} this means that, for all primes $p$,  we have an equality of Euler factors $$L_p(s+\frac{k-1}{2},F)=L_p(s,\pi):=\prod_{\p \mid p} L_{\p}(s+\frac{1}{2}, f),$$ where the $L_p(s,F)$ were defined in Section \ref{sec:siegel-mod-forms} and $L_{\p}(s+\frac{1}{2}, f)$ in Section \ref{sec:bianchi-mod-forms} for the newform $f$ corresponding to $\pi$. As in \cite{JLR}, the functional equation of $\pi$ implies that the completed $L$-function satisfies the functional equation $$\Lambda(k-s,F)=\left(\prod_{p \mid N} \epsilon_p\right) N^{s-k/2} \Lambda(s,F),$$ where $$\Lambda(s,F)=(2 \pi)^{-2s}\Gamma^2(s) L(s,F).$$
\end{proof}

\begin{rem}\rm We end this section with two remarks.
\begin{enumerate}
\item
 Arthur's multiplicity formula and~\cite[Theorem~2.2]{CPMok} 
imply that the multiplicity of $\Pi$ in the space of cuspforms is one. This has been proved in~\cite[Theorem~8.6]{Ro01} in the real quadratic case using the multiplicity preservation of the theta correspondence (\cite[Proposition~5.3]{Ro01}).
\item An alternative construction of the lift of automorphic representations has been proven by P.S. Chan \cite{PSChan} using  trace formulas (under some local conditions on $\pi$). Recently, C. P. Mok  \cite{CPMok} has also described how to obtain this lift from Arthur's endoscopic classification, based on work of Chan and Gan which relates Arthur's local correspondence with that of Gan-Takeda. 
\end{enumerate}
\end{rem}

\section{\bf Application to paramodularity}\label{sec:paramodularity}

In this section, we use our lifting result to prove the following theorem. 
\begin{thm} \label{thm:modular}
Let $C'/ \Q$ be the curve defined by
$$y^2 = 31x^6 + 952x^5 - 5764x^4 - 3750x^3 + 5272x^2 - 7060x + 4783,$$
and $B$ the Jacobian of $C'$. Then, $B$ is a paramodular abelian surface 
of conductor $223^2$ in the sense that it satisfies Conjecture~\ref{conj:BK}.
\end{thm}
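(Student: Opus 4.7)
The plan is to produce the Bianchi modular form to which Theorem~\ref{thm:main} can be applied, and then verify that its paramodular Siegel lift $g$ satisfies $L(B,s)=L(g,s)$. The numerology $N=D^2\mathrm{N}_{K/\Q}(\gN)=223^2$ strongly suggests choosing $K=\Q(\sqrt{-223})$ (so that $D=-223$ and $\mathrm{disc}(K)^2=223^2$) and looking for a Bianchi newform of \emph{level one} on $\GL_2/K$ with trivial central character whose associated Galois representation matches the one on $B[\ell]$ (for suitable $\ell$) restricted to $G_K$. So the first step is a pair of arithmetic verifications on $C'$: that $B=\mathrm{Jac}(C')$ has $\End_\Q(B)=\Z$, that it is not a restriction of scalars of an elliptic curve over a quadratic field, and that its conductor equals $223^2$ with good reduction everywhere over $K$. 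The conductor and bad-reduction computations are standard (Liu's algorithm); the fact that $B$ is of $\GL_2$-type over $K$ but not over $\Q$ should follow from an explicit endomorphism computation \'a la Dembel\'e--Kumar, and this is precisely what produces a non-Galois-invariant Bianchi form.

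The second step is to identify the relevant Bianchi newform. Using the explicit methods recalled in Section~\ref{sec:bianchi-mod-forms} (via cohomology of the Bianchi 3-manifold, cf.~\cite{rahm-sengun,yasaki}), one computes the Hecke eigensystems in $H^1_{\mathrm{cusp}}(Y_0(\CO_K),\C)$ for $K=\Q(\sqrt{-223})$ and isolates a newform $f$ whose first several Hecke eigenvalues match the traces of Frobenius on $H^1_{\mathrm{\acute{e}t}}(B_{\overline\Q},\Q_\ell)|_{G_K}$ (modulo the $\GL_2$-type splitting). One then checks, using the action of $\mathrm{Gal}(K/\Q)$ on cohomology, that $f\not\cong f^c$, so that $f$ is non-base change; this is crucial both for the hypothesis of Theorem~\ref{thm:main} and to ensure $B$ is not $\mathrm{Res}_{K/\Q}$ of an elliptic curve.

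The third and hardest step is to \emph{prove} (not just observe numerically) that $B/K$ is modular and corresponds to $f$. Here one follows the Faltings--Serre strategy~\cite{serre}: pick a small prime $\ell$, establish absolute irreducibility and that the residual representations $\overline\rho_{B,\ell}$ and $\overline\rho_{f,\ell}$ are isomorphic (by matching characteristic polynomials of Frobenius on a test set of primes controlling the corresponding deformation problem), then produce a finite computable set $S$ of primes of $K$ such that equality of traces of Frobenius on $S$ forces the two $\ell$-adic representations to be isomorphic up to semisimplification. Executing this requires: (i) a careful analysis of the deformation ring / the image of $\overline\rho_{B,\ell}$ to bound the size of the Faltings--Serre test set; (ii) machine computation of enough Hecke eigenvalues of $f$ (at primes of $K$ of reasonable norm); and (iii) a matching computation of Frobenius traces on $B$. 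This is where the main technical and computational effort lies, and where the smallness of the level $\gN=\CO_K$ (so that ramification is controlled by $D$ alone) is essential.

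Once $B/K$ is proved modular by $f$, Theorem~\ref{thm:main} applied to $\pi=\pi_f$ produces a Siegel paramodular newform $g=F$ of weight $(2,2)=(k,2)$ with $k=2$ and paramodular level $D^2\mathrm{N}_{K/\Q}(\gN)=223^2$. The $L$-factor identity
\[
L_p\!\left(s+\tfrac{k-1}{2},F\right)=\prod_{\p\mid p}L_{\p}\!\left(s+\tfrac12,f\right)
\]
from the proof of Theorem~\ref{thm:main}, combined with the factorisation $L(B,s)=L(B/K,s)=L(f,s)L(f^c,s)=\prod_{\p\mid p}L_{\p}(s,f)$ coming from inducing the $G_K$-representation on $T_\ell B$ up to $G_\Q$, yields $L(B,s)=L(g,s)$ place by place, including the correct local factors at the bad prime $223$ via Remark 3.3 / the epsilon-factor compatibility in the theorem. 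Thus $B$ satisfies Conjecture~\ref{conj:BK}. The principal obstacle throughout is step three; everything before it is a finite computation and everything after it is formal, given Theorem~\ref{thm:main}.
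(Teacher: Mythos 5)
Your proposal follows essentially the same route as the paper: locate the genuine level-one Bianchi newform over $\Q(\sqrt{-223})$ with $f^\sigma=f^\tau$, prove modularity of the base change of $B$ to $K$ by the Faltings--Serre method at a prime above $2$ (the paper works with coefficients in $\Z_2[\sqrt{2}]$ and residual image $\GL_2(\F_2)$), and then feed $\pi_f$ into Theorem~\ref{thm:main} to get the paramodular form $g$ of level $223^2$ with $L(g,s)=L(B,s)$. The only point you leave implicit is the temperedness hypothesis of Theorem~\ref{thm:main}, which, as in the paper, follows from the modularity of the surface, so your argument is correct and matches the paper's.
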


We obtain Theorem~\ref{thm:modular} as a consequence of the following statement. 

\begin{thm}\label{thm:surface-egr}
Let $K=\Q(\sqrt{-223})$ and $w = \frac{1+\sqrt{-223}}{2}$, and consider the curve
$$ C : y^2 +Q(x) y= P(x),$$
where
\begin{align*}
P &:= -8x^6 + (54w - 27)x^5 + 9103x^4 + (-14200w + 7100)x^3 - 697185x^2\\
&\qquad\qquad{} + (326468w - 163234)x + 3539399,\\
Q &:= x^3 + (2w - 1)x^2 - x. 
\end{align*}
Let $A = \mathrm{Jac}(C)$ be the Jacobian of $C$. 
Then we have the following:
\begin{enumerate}
\item The curve $C$ is a global minimal model for the base change of $C'$ to $K$ and  
it has everywhere good reduction.  
\item The surface $A$ has real multiplication by $\Z[\sqrt{2}]$, and there exists
a Bianchi newform $f$ of level $(1)$ and weight $2$ and trivial central character such that $f^\sigma = f^\tau$ and 
$$L(A, s) = L(f, s) L(f^\tau, s),$$
where
$\langle \sigma \rangle = \Gal(K/ \Q)$ and $\langle \tau \rangle = \Gal(\Q(\sqrt{2})/ \Q)$.
\end{enumerate} 
\end{thm}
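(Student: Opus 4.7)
The proof splits along parts (a) and (b), with the bulk of the work lying in the Faltings--Serre matching of $L$-series for (b). Part (a) is a direct computation: I would exhibit an explicit $\GL_2(K)$-substitution on $(x,y)$ transforming the base change $C'_K$ into the model $y^2+Q(x)y=P(x)$ stated in the theorem, and then verify minimality and everywhere good reduction by computing the discriminant of this model and checking that it is a unit in $\CO_K$. The only subtle place is the prime above $2$, where the auxiliary quadratic term $Q(x)y$ is precisely what is required to make the model smooth in residue characteristic $2$.

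For part (b), I would proceed as follows. First, I would establish real multiplication by $\Z[\sqrt{2}]$ on $A$, either via an explicit correspondence on $C$ coming from the Elkies--Kumar tables mentioned in the acknowledgements, or by computing $\End_{\overline{\Q}}(A)$ and checking that $\sqrt{2}$ acts over $K$. Next, using the cohomological framework of Section~\ref{sec:bianchi-mod-forms} and the algorithms of~\cite{rahm-sengun,yasaki}, I would compute the Hecke module $H^1_{\mathrm{cusp}}(Y_0((1)),\mathcal{W}_0)$ for $K=\Q(\sqrt{-223})$ (which has class number $7$), isolate its trivial-central-character component (cf.\ Remark~\ref{rem:central-character}), and locate a Bianchi newform $f$ of level $(1)$ and weight $2$ whose Hecke eigenvalues generate $\Z[\sqrt{2}]$. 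For many primes $\mathfrak{p}$ of $K$ I would then check the numerical identity $\trace(\mathrm{Frob}_{\mathfrak{p}}\mid T_\ell A)=a_{\mathfrak{p}}(f)+a_{\mathfrak{p}}(f)^{\tau}$ by point-counting on $C$ modulo $\mathfrak{p}$.

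The decisive step is then a Faltings--Serre argument as in~\cite{serre}, promoting this numerical agreement to equality of the two $4$-dimensional compatible systems of $\ell$-adic representations of $G_K$ in play: $V_\ell A$ on one side and $\rho_{f,\ell}\oplus\rho_{f^\tau,\ell}$ on the other. By (a) and the fact that $f$ has level $(1)$, both sides are unramified outside $\ell$ and have cyclotomic determinant. Taking the auxiliary prime $\ell=2$, I would compute the residual representations, enumerate the finitely many quadratic extensions of $K$ unramified outside $2$ that could support a non-trivial deformation, extract from them a provably sufficient Faltings--Serre test set of primes of $K$, and verify agreement of characteristic polynomials at that set; this forces the desired identity $L(A,s)=L(f,s)L(f^\tau,s)$. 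The relation $f^\sigma=f^\tau$ then follows as a corollary: $L(A,s)$ is $\Gal(K/\Q)$-invariant because $A$ descends to $\Q$, so the unordered pair $\{f,f^\tau\}$ is fixed by $\sigma$, and since the Hecke eigenvalues of $f$ are not rational this forces $f^\sigma=f^\tau$. The main obstacle is precisely this Faltings--Serre step at $\ell=2$: the residual image is typically small, deformation theory is more delicate at $2$ than at odd primes, and a careful enumeration of the relevant quadratic extensions of $K$ unramified outside $2$ is needed to produce a test set of primes of manageable size at which the computer verification of trace equalities remains feasible.
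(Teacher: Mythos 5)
Your overall strategy for (a) (explicit model plus discriminant/Igusa--Clebsch check) and the first half of (b) (compute the genuine Bianchi newform of level $(1)$ over $\Q(\sqrt{-223})$, match Euler factors at many primes, then argue via Faltings--Serre at $2$) is the same as the paper's. But the decisive step is set up in a way that does not work as stated. You propose to compare the \emph{four}-dimensional $2$-adic representations $V_2A$ and $\rho_{f}\oplus\rho_{f^\tau}$ and then ``enumerate the finitely many quadratic extensions of $K$ unramified outside $2$ that could support a non-trivial deformation''. Since $2$ ramifies in $\Z[\sqrt{2}]$ and $\sqrt{2}\equiv 0 \bmod \lambda_2$, the common residual representation of these four-dimensional objects is two copies of one and the same $2$-dimensional $\GL_2(\F_2)$-representation; it is therefore \emph{not} absolutely irreducible, the standard Faltings--Serre criterion with irreducible residual image does not apply, and the quadratic-extension enumeration you describe is the variant adapted to trivial (or $2$-group) residual image, which is not the situation here. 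Handling a non-multiplicity-free reducible residual representation in dimension $4$ by trace checks alone would require a genuinely harder argument that your plan does not supply.

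The paper sidesteps this by exploiting the real multiplication: it compares the \emph{two}-dimensional $\lambda_2$-adic representations $\rho_A,\rho_f:\Gal(\Qbar/K)\to\GL_2(\Z_2[\sqrt{2}])$. This requires three ingredients absent from your proposal. First, one must prove that the coefficient field of $\rho_f$ really is $\Q_2(\sqrt{2})$ (Lemma~\ref{lem:coefficient-field}, via Taylor's result applied at the split primes above $2$); a priori it could be larger. Second, condition (iii) of Theorem~\ref{thm:faltings-serre} must be verified: the paper shows both residual images are all of $\GL_2(\F_2)\simeq S_3$ (an order-$3$ element from $a_{(3)}(f)$ odd, an order-$2$ element from ordinariness at the primes above $2$ together with the fact that $K$ has class number $7$, so no unramified cubic extension), and then shows $\bar\rho_A\simeq\bar\rho_f$ by proving there is a \emph{unique} $S_3$-extension of $K$ ramified only at $\lambda_2$ whose normal closure over $\Q$ has group $\Z/2\Z\times S_3$, using $f\equiv f^\sigma \bmod \lambda_2$, elimination of quadratic subfields via inert primes among $3,17,29$, and class field theory. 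You cannot ``compute the residual representation'' of $\rho_f$ directly from Hecke eigenvalues, so some such indirect identification is unavoidable. Third, the test set $T$ is produced not from quadratic extensions of $K$ but from extensions $\widetilde{M}$ of the field $M$ cut out by the residual representation with $\Gal(\widetilde{M}/K)\simeq M_2^0(\F_2)\rtimes\GL_2(\F_2)$, following the recipe of~\cite{DGP}; in the paper this yields the primes above $\{3,5,7,19,29,31\}$, at which the traces are already known to agree. With these corrections (or with a substantially expanded argument for the $4$-dimensional, residually reducible setting) your plan would align with the paper; as written, the key step has a gap. Your closing derivation of $f^\sigma=f^\tau$ from the descent of $A$ to $\Q$ is fine, though in the paper this identity comes directly from the computed Hecke data.
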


\begin{rem}\label{rem:surface-egr}\rm
Theorem~\ref{thm:surface-egr} (b) implies that $A$ is modular in the sense that
its $L$-series is given by the product of those of the forms $f$ and $f^\tau$. So, this is an
instance of the Eichler-Shimura Conjecture (\cite[Conjecture 3]{taylor_icm}) in dimension 
$2$. We think that this is the first non-trivial such example over an imaginary quadratic field. 
(There is a significant amount of numerical data going back to \cite{grunewald, cremona}
which supports this conjecture in the case of elliptic curves.)
\end{rem}

The rest of this section is dedicated to proving Theorem~\ref{thm:surface-egr}. But first,
we show how to derive Theorem~\ref{thm:modular} from it.

\begin{proof} The equality $f^\sigma = f^\tau$ implies that the form $f$ cannot be a base change. 
Moreover, as the abelian surface $A$ is modular by $f$, the cuspidal irreducible automorphic representation 
associated to $f$ is tempered. So by Theorem~\ref{thm:main} it admits a lift $g$ of weight $2$ to $\GSp_4/\Q$ with paramodular level $223^2$. By construction, the identity $L(A, s) = L(f, s) L(f^\tau, s)$ implies that $L(g,s) = L(B,s)$. 
So $B$ is paramodular. 
\end{proof}



\subsection{\bf The abelian surface} \label{sec:abelian-surface}
As we mentioned in Remark~\ref{rem:surface-egr} above, the surface $A$ satisfies the
Eichler-Shimura Conjecture. It was in fact located via explicit computations of Bianchi modular 
forms. More specifically, we used the extensive data provided in~\cite{rahm-sengun}.

To simplify notations, let $S_2(\mathbf{1}):=S_2(\mathcal{O}_K,\mathbf{1})$ be the space of Bianchi cusp forms of weight $2$, level $\gN = (1)$ and
central character $\chi = \mathbf{1}$. Let $S_2^{bc}(\mathbf{1})$ be the subspace of  $S_2(\mathbf{1})$ which 
consists of twists of those Bianchi cusp forms which arise from classical elliptic newforms via base change and of those 
Bianchi cusp forms which are CM (see \cite{fgt}), and $S_2^{bc}(\mathbf{1})^{\perp}$ its orthogonal complement with respect to the Petersson
inner product. We will call the newforms in $S_2^{bc}(\mathbf{1})^\perp$ {\it genuine}. Given a newform $f$, we 
let $\CO_f = \Z[a_\p(f): \p \subset \CO_K]$  denote the order generated by the Hecke eigenvalues of $f$ and $L$ the field of fractions of $\CO_f$. 

Of all the $186$ imaginary quadratic fields $K$ in~\cite{rahm-sengun} 
(including all those $153$ for which $|D| < 500$), there are {\it only}\footnote{Note that our $S_2(\mathbf{1})$ 
corresponds to $S_0(1)^+$ of \cite{rahm-sengun} by the arguments in~\cite[Sections 2.4 and 2.5]{Lingham}.} 
six for which $S_2^{bc}(\mathbf{1}) \subsetneq S_2(\mathbf{1})$. In each case, $S_2^{bc}(\mathbf{1})^\perp$ is an irreducible Hecke module 
of dimension $2$, except for $|D| = 643$ when there are two newforms, with rational Hecke eigenvalues, that are $\Gal(K/\Q)$-conjugate.
Table~\ref{table:bianchi-newforms} provides a summary of this data. 

\begin{table}[h]
\caption{Genuine Bianchi newforms of weight $2$ over $\Q(\sqrt{D})$ found in \cite{rahm-sengun}.} 
\label{table:bianchi-newforms}
\begin{tabular}{ >{$}c<{$}   >{$}c<{$}   >{$}c<{$}   >{$}c<{$}  >{$}c<{$}  >{$}c<{$} >{$}c< {$}}
\toprule
| D |  & 223 & 415 & 455 & 571 & 643& 1003 \\ 
\midrule
\dim S_2^{bc}(\mathbf{1})^{\perp} & 2 & 2 & 2& 2 & 1+1 & 2\\
L & \Q(\sqrt{2}) & \Q(\sqrt{3}) & \Q(\sqrt{5}) & \Q(\sqrt{5}) & \Q  & \Q(\sqrt{7})\\ 
{}[\CO_L: \CO_f]& 1 & 11 & 2 & 2 & 1 & 1\\
\bottomrule
\end{tabular}
\end{table}

\begin{rem}\label{rem: indices}\rm
  The index entries in Table~\ref{table:bianchi-newforms} are based on
  finite sets of Hecke eigenvalues.  Since there is no analogue of
  Sturm's bound for Bianchi modular forms, the last row entries are
  not proven to be correct. However, we strongly expect them to
  reflect the truth.
\end{rem}

\medskip 
Let $f$ be any of the genuine
Bianchi newforms listed in Table~\ref{table:bianchi-newforms}.
Let $\sigma$ and $\tau$ denote the non-trivial elements of
$\Gal(K/\Q)$ and $\Gal(L/\Q)$ respectively.  
Let $f^\tau$ be $\Gal(L/\Q)$-conjugate of $f$, which is 
determined by the relation
$$a_\p(f^\tau) = \tau(a_\p(f))\,\,\text{for all primes}\,\, \p,$$
and $f^\sigma$ be the $\Gal(K/\Q)$-conjugate of $f$ which is
determined by
$$a_\p(f^\sigma) = a_{\sigma(\p)}(f)\,\,\text{for all primes}\,\, \p.$$

Except for the discriminant $|D| = 643$, dimension considerations show that $f^\tau = f^\sigma,$ so we have
\begin{equation*}
a_{\sigma(\p)}(f)=\tau( a_{\p}(f)) \,\,\text{for all primes}\,\, \p.
\end{equation*}
A refinement of the Eichler-Shimura Conjecture implies that there exists an abelian surface $A/K$ with
$\CO_f \subset \End_K(A)$ ({\it i.e.} $A$ has real multiplication by $\CO_f$) 
such that $$L(A, s) = L(f, s) L(f^\tau, s).$$

There are only two pairs $(|D|, \mathrm{Disc}(L))$, where provably the
Hecke ei\-gen\-va\-lues of the newform $f$ generate the ring of integers of $L$,
namely $(223, 8)$ and $(1003,28)$. For the first pair, the (conjectured) abelian 
surface attached to the form is principally polarisable (see~\cite[Corollary~2.12 and
Proposition~3.11]{gonzalez-guardia-rotger05}) in contrast to the second pair, for 
which this does not seem to be the case. So we will only focus on the first pair, 
for which we found the associated abelian surface. In that case, we have $\CO_f = \Z[\sqrt{2}]$ 
(see Table~\ref{table:eigenvalues-iqsqrt223} for  the Hecke eigenvalues of the form, which we 
computed using Yasaki's algorithm~\cite{yasaki} implemented in \textsf{Magma}~\cite{magma97}). 
Our current approach does not allow us to find the remaining
surfaces. (We elaborate on this in Remark~\ref{rem:other-surfaces}.) 

\begin{table}[h]
\caption{Arithmetic data associated to the genuine Bianchi newform $f$ of weight $2$ and 
level $(1)$ over $\Q(\sqrt{-223})$. Here, $w =\frac{1+\sqrt{-223}}{2}$ and $e = \sqrt{2}$.}
\label{table:eigenvalues-iqsqrt223}\small
\begin{tabular}{ >{$}r<{$}   >{$}r<{$}   >{$}r<{$}   >{$}r<{$}  }
\toprule
\mathrm{N}\p & \p & a_{\p}(f) & (x^2 - a_{\p}(f) x + \mathrm{N}(\p))(x^2 - \tau(a_{\p}(f)) x + \mathrm{N}(\p))\\
\midrule
2 & [ 2, w + 1 ] & e - 1 & x^4 + 2x^3 + 3x^2 + 4x + 4 \\
2 & [ 2, w + 2 ] & -e - 1 & x^4 + 2x^3 + 3x^2 + 4x + 4 \\
7 & [ 7, 2w + 5 ] & -e + 2 & x^4 - 4x^3 + 16x^2 - 28x + 49 \\
7 & [ 7, 2w ] & e + 2 & x^4 - 4x^3 + 16x^2 - 28x + 49 \\
9 & [ 3 ] & -3 & x^4 + 6x^3 + 27x^2 + 54x + 81 \\ 
17 & [ 17, 2w + 9 ] & -2e - 1 & x^4 + 2x^3 + 27x^2 + 34x + 289 \\
17 & [ 17, 2w + 6 ] & 2e - 1 & x^4 + 2x^3 + 27x^2 + 34x + 289 \\
19 & [ 19, 2w + 8 ] & e - 4 & x^4 + 8x^3 + 52x^2 + 152x + 361 \\
19 & [ 19, 2w + 9 ] & -e - 4 & x^4 + 8x^3 + 52x^2 + 152x + 361 \\
25 & [ 5 ] & 0 & x^4 + 50x^2 + 625 \\
29 & [ 29, 2w + 25 ] & 2e + 3 & x^4 - 6x^3 + 59x^2 - 174x + 841 \\
29 & [ 29, 2w + 2 ] & -2e + 3 & x^4 - 6x^3 + 59x^2 - 174x + 841 \\
31 & [ 31, 2w + 4 ] & 4e - 2 & x^4 + 4x^3 + 34x^2 + 124x + 961 \\
31 & [ 31, 2w + 25 ] & -4e - 2 & x^4 + 4x^3 + 34x^2 + 124x + 961 \\
\bottomrule
\end{tabular}
\end{table}


For the discriminant $|D|=223$, since $A$ is principally polarised and has real multiplication by $\CO_L = \Z[\sqrt{2}]$, it corresponds
to a $K$-rational point on the Humbert surface $Y_{-}(8)$ of discriminant $8$. 
In their recent paper~\cite{ek}, Elkies and Kumar give an explicit rational model for $Y_{-}(8)$ as a 
double-cover of the weighted projective space $\mathbf{P}^2_{r,s}$. We look for $A$ using this 
model. In fact, the same heuristic as in~\cite[Proposition 6 and Remark 5]{dk} show that $A$ must be 
the base change of a surface $B$ defined over $\Q$. Indeed, our newform satisfies the identity 
$f^\tau = f^\sigma$. So by Theorem~\ref{thm:main}, it admits a lift to a classical Siegel newform $g$
of genus $2$, weight $2$ and level $D^2$, with integer coefficients. Moreover, $g$ is not a Gritsenko lift. So,
assuming Conjecture~\ref{conj:BK}, $g$ corresponds to 
an abelian surface $B$ over $\Q$ such that $\End_{\Q}(B) = \Z$ and $L(B,s) = L(g, s)$. It follows that
$A = B \otimes_\Q K$. 

We recall that, as a double-cover of $\mathbf{P}^2_{r,s}$, $Y_{-}(8)$ is given by the
equation
\[
z^2 = 2(16rs^2+32r^2s-40rs-s+16r^3+24r^2+12r+2).
\] 
The Igusa-Clebsch invariants as a point in $\Proj^3_{(1:2:3:5)}$ are given by
\[(I_2: I_4: I_6: I_{10}) = 
\left( -\frac{24B_1}{A_1} : -12A : \frac{96AB_1-36A_1B}{A_1} : -4A_1B_2 \right),
\]
where
\begin{align*}
A_1 &= 2rs^2, \\
A &= -(9rs+4r^2+4r+1)/3, \\
B_1 &= (rs^2(3s+8r-2))/3, \\
B &= -(54r^2s+81rs-16r^3-24r^2-12r-2)/27, \\
B_2 &=  r^2.
\end{align*}

The surface $B$ corresponds to the parameters $s=-2$ and $r = 8$. It is the Jacobian of the curve
$$C':\, y^2 = 31x^6 + 952x^5 - 5764x^4 - 3750x^3 + 5272x^2 - 7060x + 4783,$$
whose discriminant is $-2^{20}223^{15}$. The curve $C'\otimes_\Q K$ admits the 
global minimal model $C$ listed in Theorem~\ref{thm:surface-egr}.
One verifies that the curve $C$ has discriminant $1$ and integer Igusa-Clebsch invariants 
$$I_2 = -24,\,\, I_4 = -540,\,\, I_6 = 4968,\,\, I_{10} =4096.$$  
So $A = \mathrm{Jac}(C)$ has everywhere good reduction.
 
\begin{rem}\label{rem:other-surfaces}\rm
  For the discriminants $|D| = 415, 1003$, the class number of $L$ is
  $2$. The same heuristics as in~\cite[Remark 8]{dk} suggest that
  the surface $A$ is likely not principally polarised. Further, for
  $|D| = 415, 455, 571$, $A$ does not have real multiplication by the
  maximal order in $L$.  So finding these surfaces will require
  working with more general Humbert surfaces for which no explicit
  models are yet available.
\end{rem}

\subsection{\bf Proof of modularity}\label{sec:modularity}
The above discussion already proves Theorem~\ref{thm:surface-egr} (a).
We will now show that $A$ is modular hence completing the proof of the theorem. 
In fact, we already have strong evidence that this is the case. Indeed, let $\p$ be a prime,
and $L_\p(A, s)$ (resp. $L_\p(f, s)$ and $L_\p(f^\tau, s)$) the Euler factor of $A$ (resp. $f$ and $f^\tau$) at $\p$. 
As a built-in of our search method, we already know that for each prime $\p$ listed in Table~\ref{table:eigenvalues-iqsqrt223},  
we have 
$$L_\p (A, s) = L_\p(f, s) L_\p(f^\tau, s) = Q_\p(\mathrm{N}\p^{-s})^{-1},$$ where
$Q_\p(x) := (1 - a_{\p}(f) x + \mathrm{N}(\p) x^2)(1 - \tau(a_{\p}(f)) x + \mathrm{N}(\p) x^2).$

\medskip
Let $\lambda \subset \CO_L$ be a prime ideal. We recall that the $\lambda$-adic Tate module of $A$ is
given by 
$$T_\lambda(A) := \varprojlim_{n} A[\lambda^n] \simeq \CO_{L,\lambda} \times \CO_{L,\lambda},$$ where
$\CO_{L,\lambda}$ is the completion of $\CO_L$ at $\lambda$, and 
$$A[\lambda^n] = \left\{x \in A(\Qbar): \alpha x = 0\,\,\forall \alpha \in \lambda^n\right\}.$$
This is naturally endowed with an action of $\Gal(\Qbar/K)$ giving rise to the $\lambda$-adic representation
$$\rho_{A, \lambda}: \Gal(\Qbar/K) \rightarrow \GL_2(\CO_{L,\lambda}).$$

In the rest of this section, we will drop the reference to the prime in our notations as we are only
interested in the prime $\lambda = \lambda_2$ (above $2$) for which $\CO_{L,\lambda_2} =\Z_2[\sqrt{2}]$. 
So our aim is to show that the $\lambda_2$-adic Tate module
$$\rho_A: \Gal(\Qbar/K) \rightarrow \GL_2(\Z_2[\sqrt{2}])$$
is isomorphic to the representation
$$\rho_f: \Gal(\Qbar/K) \rightarrow \GL_2(\overline{\Z}_2)$$
associated to $f$ by work of Taylor et al \cite{HST, taylor_94,  berger-harcos, CPMok}.
But, in order to do so, we must first determine the coefficient field of $\rho_f$.
\begin{lem}\label{lem:coefficient-field}
The coefficient field of $\rho_f$ is $\Q_2(\sqrt{2})$
so we have $$\rho_f: \Gal(\Qbar/K) \rightarrow \GL_2(\Z_2[\sqrt{2}]).$$
\end{lem}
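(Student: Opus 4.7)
The plan is standard: combine Chebotarev density with a descent criterion for absolutely irreducible Galois representations due to Carayol (equivalently, the pseudorepresentation theorem of Rouquier/Nyssen).

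First, by the construction of $\rho_f$ in \cite{HST, taylor_94, berger-harcos, CPMok}, for every prime $\p$ of $K$ at which $\rho_f$ is unramified one has $\trace \rho_f(\Frob_\p) = a_\p(f) \in \CO_f = \Z[\sqrt{2}]$. Chebotarev density implies that the subfield of $\overline{\Q}_2$ generated by all such traces is the closure of $\Q_2(\{a_\p(f)\}_\p)$. The eigenvalue $a_{(2,w+1)}(f) = \sqrt{2}-1$ listed in Table~\ref{table:eigenvalues-iqsqrt223} already forces this trace field to contain $\Q_2(\sqrt{2})$, and since every $a_\p(f)$ lies in $\Z_2[\sqrt{2}]$, the trace field is exactly $\Q_2(\sqrt{2})$. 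Likewise, $\det \rho_f$ is the $2$-adic cyclotomic character (the central character of $f$ being trivial), so takes values in $\Z_2^\times \subset \Z_2[\sqrt{2}]^\times$.

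Next, I would argue that $\rho_f$ is absolutely irreducible. Any reducibility of the semisimplification would exhibit $f$ as an automorphic induction from a Hecke character of a quadratic extension of $K$, hence make $f$ either CM or a (twist of a) base change; this is excluded because $f$ is genuine in the sense of Subsection~\ref{sec:abelian-surface}, i.e.\ lies in $S_2^{bc}(\mathbf{1})^{\perp}$. With absolute irreducibility in hand, Carayol's descent criterion applies: an absolutely irreducible representation over $\overline{\Q}_2$ whose character takes values in a subfield $F \subset \overline{\Q}_2$ is $\overline{\Q}_2$-conjugate to one valued in $\GL_2(F)$. Taking $F = \Q_2(\sqrt{2})$ gives $\rho_f \colon \Gal(\overline{\Q}/K) \to \GL_2(\Q_2(\sqrt{2}))$, and since the image is compact and the traces are integral, a further conjugation lands in $\GL_2(\Z_2[\sqrt{2}])$, as $\Z_2[\sqrt{2}]$ is the unique maximal order of $\Q_2(\sqrt{2})$ (ramified at $2$).

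The only non-routine ingredient is the absolute irreducibility; in our setting it is guaranteed by the fact that $f$ is genuine and $\dim S_2^{bc}(\mathbf{1})^\perp = 2$ is an irreducible Hecke module (Table~\ref{table:bianchi-newforms}), so the expected proof reduces to citing the trace-field computation, invoking Carayol descent, and remarking on the maximal order.
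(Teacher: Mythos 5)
Your main step -- ``an absolutely irreducible representation over $\overline{\Q}_2$ whose character takes values in a subfield $F$ is conjugate to one valued in $\GL_2(F)$'' -- is not true over a $2$-adic local field, and this is precisely where the content of the lemma lies. For an absolutely irreducible representation with traces in $F$ there is a Brauer obstruction: the enveloping algebra is a central simple algebra of degree $2$ over $F$, and it may be the quaternion division algebra (for instance, a continuous absolutely irreducible representation of the unit group of a maximal order of the quaternion division algebra over $F$ has all traces in $\CO_F$ but admits no model over $F$). Rouquier--Nyssen does not rescue this: besides the same obstruction, it requires $d!$ invertible, which fails for $d=2$ and $p=2$. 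The correct descent tool is Carayol's lemma, which needs the \emph{residual} representation to be absolutely irreducible and then produces a model over $\CO_F=\Z_2[\sqrt{2}]$; here residual absolute irreducibility is true but is itself a nontrivial fact (it is proved later in Section~\ref{sec:modularity}, where $\im(\overline{\rho}_f)=\GL_2(\F_2)$ is established using ordinarity at the primes above $2$ and the class number $7$ of $K$), and you neither prove it nor invoke it. Relatedly, your irreducibility argument for $\rho_f$ conflates two different degenerations: reducibility of $\rho_f^{ss}$ means it is a sum of two characters of $\Gal(\Qbar/K)$ (an ``Eisenstein'' shape), whereas genuineness of $f$ (lying in $S_2^{bc}(\mathbf{1})^{\perp}$) excludes dihedral/CM/base-change shapes, i.e.\ induced representations, not reducible ones; and in any case absolute irreducibility of $\rho_f$ alone would not suffice for the descent, by the obstruction above. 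A smaller slip: you read off $\sqrt{2}$ from $a_\p$ at $\p\mid 2$, but $\rho_f$ is ramified exactly at the primes above $2$, so $a_\p$ there is not a trace of Frobenius; use an unramified prime instead (e.g.\ $a_\p=-2\sqrt{2}-1$ above $17$) to see that the trace field is all of $\Q_2(\sqrt{2})$.

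For contrast, the paper avoids all of this by quoting a result tailored to the construction of $\rho_f$: since $2$ splits in $K$ and the Frobenius eigenvalues at the primes above $2$ are distinct and do not sum to zero, Taylor's Corollary~1 lets one take the coefficient field to be the quartic field generated by these eigenvalues, namely $\Q[x]/(x^4+2x^3+3x^2+4x+4)$, whose completions at the primes above $2$ are $\Q_2(\sqrt{2})$. Note that this field of definition is a priori larger than the trace field, which is exactly the subtlety your proposal treats as routine. Your approach can be repaired -- compute the trace field at an unramified prime, prove $\overline{\rho}_f$ is absolutely irreducible as in the paper's later argument, then apply Carayol's lemma -- but as written the descent step and the irreducibility step are both gaps.
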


\begin{proof}
By construction, the image of $\rho_f$ lies in an extension of $\Q_2$
of degree at most $4$. The prime $2$ is split in $K$, and the eigenvalues 
of the Frobenii at the primes above it are distinct and do not add up to zero
(see Table~\ref{table:eigenvalues-iqsqrt223}), so by \cite[Corollary 1]{taylor_94} 
we can take the coefficient field $L_f$ to be the one given by the polynomial 
$x^4+2x^3+3x^2+4x+4$. But, there are two split primes above $2$ in $L_f$, and the
completion of $L_f$ at either of them is isomorphic to $\Q_2(\sqrt{2})$. So the
image of $\rho_f$ is in fact contained in $\GL_2(\Z_2[\sqrt{2}])$.
\end{proof}

From Lemma~\ref{lem:coefficient-field}, we now have
$$\rho_A, \rho_f: \Gal(\Qbar/K) \rightarrow \GL_2(\Z_2[\sqrt{2}]).$$
We denote their reductions modulo $2$ by
$\bar{\rho}_A$ and $\bar{\rho}_f$ respectively. We will show that
$\rho_A \simeq \rho_f$ by making use of the following version
of the so-called Faltings-Serre criterion~\cite{serre, DGP}.

\begin{thm}[Faltings-Serre]\label{thm:faltings-serre} 
Let
\[
\rho_1,\rho_2: \Gal(\Qbar/K) \rightarrow \GL_2(\overline{\Z}_2) 
\]
be two continuous representations, whose reductions modulo $2$ are $\bar{\rho}_1$ 
and $\bar{\rho}_2$. Suppose that 

\begin{itemize}
\item[(i)] $\det (\rho_1) = \det (\rho_2)$;
\item[(ii)] $\rho_1$ and $\rho_2$ are unramified outside a finite set of primes $S$;
\item[(iii)] $\bar{\rho}_1$ and $\bar{\rho}_2$ are absolutely irreducible 
and isomorphic.
\end{itemize}
Then there exists a computable {\rm finite} set of primes $T$ such that 
$\rho_1 \simeq \rho_2$ if and only if 
 $$\mathrm{Tr}(\rho_1(\mathrm{Frob}_{\p})) = \mathrm{Tr}(\rho_2(\mathrm{Frob}_{\p}))$$
for all $\p \in T$.
\end{thm}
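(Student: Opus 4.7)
The plan is to invoke the Faltings--Serre method, which converts the question of isomorphism of two 2-adic Galois representations into a finite check of Frobenius traces via a cohomological obstruction argument.

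First I would conjugate $\rho_2$ so that $\bar{\rho}_1 = \bar{\rho}_2 =: \bar{\rho}$, which is possible by hypothesis (iii) and Schur's lemma for absolutely irreducible representations. Both representations then factor through $G_S := \Gal(K_S/K)$, where $K_S$ is the maximal extension of $K$ unramified outside $S$ (enlarged to contain the primes above $2$). Supposing $\rho_1 \not\simeq \rho_2$, after a further conjugation let $n \geq 1$ be maximal with $\rho_1 \equiv \rho_2 \pmod{2^n}$. A direct computation shows that
$$\delta(g) := \bigl(\rho_1(g)\rho_2(g)^{-1} - I\bigr)/2^n \bmod 2$$
defines a 1-cocycle $G_S \to \mathrm{ad}(\bar{\rho}) = M_2(\ov{\F}_2)$. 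Condition (i) forces $\delta$ to land in the trace-zero part $\mathrm{ad}^0(\bar{\rho})$, and the class $[\delta] \in H^1(G_S, \mathrm{ad}^0(\bar{\rho}))$ is trivial if and only if $\rho_2$ can be further conjugated to agree with $\rho_1$ modulo $2^{n+1}$.

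The essential finiteness input is that $H^1(G_S, \mathrm{ad}^0(\bar{\rho}))$ is a finite group, a consequence of Hermite--Minkowski applied to the finite discrete $G_S$-module $\mathrm{ad}^0(\bar{\rho})$ (there are only finitely many finite extensions of $K$ unramified outside $S$ of a given bounded degree). Let $L/K$ be the compositum of the fixed field of $\ker(\bar{\rho})$ with the fixed fields of all 1-cocycle representatives for elements of $H^1(G_S, \mathrm{ad}^0(\bar{\rho}))$; this is a finite extension, effectively computable from $\bar{\rho}$ and $S$. By Chebotarev's density theorem, I can pick a finite set $T$ of primes of $K$ whose Frobenius conjugacy classes cover $\Gal(L/K)$. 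The congruence
$$\trace(\rho_1(\Frob_\p)) - \trace(\rho_2(\Frob_\p)) \equiv 2^n\, \trace\bigl(\bar{\rho}(\Frob_\p)\,\delta(\Frob_\p)\bigr) \pmod{2^{n+1}}$$
then shows that matching traces on all $\p \in T$ forces $\delta$ to vanish on $\Gal(L/K)$, hence $[\delta] = 0$, contradicting the choice of $n$.

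The main obstacle is the iterative character of the argument: the cocycles produced at different levels $n$ are a priori distinct elements of the same cohomology group, and one must verify that a single fixed finite set $T$ detects them all at once. The resolution is the non-degeneracy of the pairing between $\mathrm{ad}^0(\bar{\rho})$-valued cocycles and Frobenius elements via the trace form, which is guaranteed by the absolute irreducibility of $\bar{\rho}$ (ensuring in particular $\mathrm{ad}^0(\bar{\rho})^{G_S} = 0$ so that the pairing is perfect on $H^1/B^1$). Hence one computable $T$, built uniformly from $\bar{\rho}$ and $S$, works at every level, which is precisely the content of the theorem.
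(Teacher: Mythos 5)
Your skeleton --- conjugate so that the residual representations coincide, take the maximal $n$ with $\rho_1\equiv\rho_2\bmod 2^n$, form the deviation cocycle $\delta$ with values in trace-zero matrices, get finiteness from Hermite--Minkowski, and detect $\delta$ on Frobenius elements chosen by Chebotarev --- is the standard Faltings--Serre strategy, and the computations you actually carry out are correct: $\delta$ is a cocycle, equal determinants force $\trace\delta=0$, and $\trace\rho_1(\Frob_\p)-\trace\rho_2(\Frob_\p)\equiv 2^n\,\trace\bigl(\delta(\Frob_\p)\bar{\rho}(\Frob_\p)\bigr)\bmod 2^{n+1}$ (one also needs, and you do not check, that $g\mapsto\trace(\delta(g)\bar{\rho}(g))$ is a class function, so that working with Frobenius conjugacy classes is legitimate; this does hold). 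The paper itself does not reprove the criterion --- it quotes \cite{serre,DGP} and only recalls the explicit construction of $T$ --- so an honest proof along your lines would be welcome, but your decisive last step fails.

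The gap is exactly the residue-characteristic-$2$ subtlety the theorem is designed around. You assert that absolute irreducibility gives $\mathrm{ad}^0(\bar{\rho})^{G_S}=0$ and hence a ``perfect pairing'', so that $\trace(\delta(g)\bar{\rho}(g))=0$ for all $g$ forces $[\delta]=0$. In characteristic $2$ the identity matrix has trace $0$, so the scalars $\F_2\cdot I$ lie inside $\mathrm{ad}^0(\bar{\rho})$ and are invariant: $\mathrm{ad}^0(\bar{\rho})^{G_S}\neq 0$ no matter how irreducible $\bar{\rho}$ is. Concretely, take $\delta=\epsilon\cdot I$ with $\epsilon$ the quadratic character through which $\Gal(M/K)\simeq\GL_2(\F_2)$ maps onto $\{\pm1\}$ (here $M$ is the field cut out by $\bar{\rho}$): then $\trace(\delta(g)\bar{\rho}(g))=\epsilon(g)\trace\bar{\rho}(g)$ vanishes identically, because $\epsilon(g)\neq 0$ only when $\bar{\rho}(g)$ is an involution, which has trace $0$; yet $\epsilon\cdot I$ is not a coboundary with values in $\mathrm{ad}^0$ (it is one only after allowing conjugation by $I+2^nX$ with $X$ of nonzero trace, i.e.\ coboundaries from all of $\mathrm{ad}$, which in characteristic $2$ is strictly larger --- a second point your $\mathrm{ad}^0$-cohomology framework does not account for). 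So vanishing of your pairing on all of $\Gal(L/K)$ does not imply $[\delta]=0$, and the iteration problem you flag is not resolved by the claimed non-degeneracy. This is precisely why the paper, following \cite{serre,DGP}, builds $T$ differently: one enumerates by class field theory all extensions $\widetilde{M}/M$, unramified outside $S$ and Galois over $K$ with group $M_2^0(\F_2)\rtimes\im(\bar{\rho}_1)$, and for each chooses a prime whose Frobenius $(A,B)$ satisfies $\trace(AB)\neq 0$; proving that such primes exist and that this finite test suffices (together with the separate treatment of the scalar/quadratic-twist degeneracies) is the real content of the mod-$2$ criterion, and it does not follow formally from absolute irreducibility as your argument claims.
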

There is an explicit description of the set $T$, which we recall here
for the sake of completeness (see~\cite{DGP} for more details on
this). Assume that the image lies in $\GL_2(\F_2)$, which will be the
case in our example.


Let $M$ be the fixed field of $\im(\overline{\rho}_1)$, the residual
image of $\rho_1$.  (We have the same fixed field for
$\im(\overline{\rho}_2)$ since $\bar{\rho}_1$ and $\bar{\rho}_2$ are
isomorphic.) Let $M_2^0(\F_{2})$ be the set of all trace zero $2 \times 2$ matrices with coefficients in $\F_2$.
We note that this is a $2$-group of order $8$, and we consider the set of all extensions $\widetilde M$ of
$M$ that are unramified outside $S$, such that $\widetilde{M}$ is Galois over $K$ and 
$\Gal(\widetilde{M}/K)\simeq M_2^0(\F_{2})\rtimes \im(\overline{\rho}_1)$. Each such $\widetilde{M}$ is a compositum of 
quadratic extensions of $M$, so there is a canonical isomorphism
$\varphi_{\widetilde{M}}:\Gal(\widetilde{M}/K)\simeq M_2^0(\F_{2})\rtimes \im(\overline{\rho}_1)$. 
For algorithmic purpose, this set is determined explicitily using class field theory (see~\cite[Lemma 5.6]{DGP}). For
each $\widetilde{M}$, we then find a prime ideal $\p_{\widetilde{M}}\subset \CO_K$ such that 
$\varphi_{\widetilde{M}}(\Frob_{\p_{\widetilde{M}}})=(A,B)$ with $\trace(AB) \neq 0$. 
The set $\{\p_{\widetilde M}\}$ thus obtained has the desired properties.

\begin{proof}[Proof of Theorem~\ref{thm:surface-egr} (b)] 
We will use Theorem~\ref{thm:faltings-serre} to show that $\rho_A \simeq \rho_f$; 
hence that $A$ is modular. 

We note that $\rho_A$ and $\rho_f$ are unramified away from $\lambda_2$ (for $\rho_f$ this uses the 
local-global compatibility results in~\cite{CPMok}) so they satisfy (ii) with
$S = \{\p \mid 2 \}$. Condition (i) is satisfied by $\rho_f$ since $f$ has trivial Nebentypus. It
is satisfied by $\rho_A$ by basic properties of Tate modules. So we only need to check 
(iii), find the set $T$ and show that 
\begin{align*}
\trace(\rho_A(\Frob_{\p})) = \trace(\rho_f(\Frob_{\p}))\,\,\text{for all}\,\,\p \in T.
\end{align*}
Since $\CO_L/\lambda_2 = \F_2$, $\im(\overline{\rho}_f)$ and $\im(\overline{\rho}_A)$ are both contained in 
$\GL_2(\F_2) \simeq S_3$, where $S_3$ is the symmetric group on $3$ elements. 

\smallskip
\noindent {\bf Claim:} $\im(\overline{\rho}_A) =
\im(\overline{\rho}_f) = \GL_2(\F_2)$. In particular,
$\bar{\rho}_A$ and $\bar{\rho}_f$ are absolutely irreducible.

\smallskip
First, recall that $\trace(\bar{\rho}_f(\Frob_{\p})) = a_{\p}(f)
\bmod \lambda_2$, and that since $\rho_f$ is unramified away from $2$,
$\bar{\rho}_f(\Frob_{\p})$ is either trivial or has order $3$ for every odd prime $\p$. 
In particular, for $\p =(3)$, $\trace(\bar{\rho}_f(\Frob_\p)) = -3 = 1 \in \F_2$ implies that 
$\bar{\rho}_f(\Frob_\p)$ has order $3$. Next, for $\p \mid 2$, the representation
$\rho_f$ is ordinary at $\p$, so the restriction of $\bar{\rho}_f$ to the 
decomposition group $D_\p$ is of the form
$$\bar{\rho}_f|_{D_\p}\sim \begin{pmatrix}1&\ast \\ 0 &1 \end{pmatrix} \mod \lambda_2.$$
If $\mathrm{Im}(\bar{\rho}_f|_{D_\p})$ was trivial for both primes $\p \mid 2$, then
$\bar{\rho}_f$ would be unramified at $2$ (and hence everywhere). In that case, the
fixed field of $\ker (\bar{\rho}_f)$ would be an unramified cubic extension of $K$, 
which is impossible since the class number of $K$ is $7$. 
So  $\im(\overline{\rho}_f)$ contains an element of order $2$, hence 
$\im(\overline{\rho}_f) = \GL_2(\F_2)$. Similarly, one shows that 
$\im(\overline{\rho}_A) = \GL_2(\F_2)$.

\medskip
Next, let $M_f$ and $M_A$ be the Galois extensions of $K$ cut out by
$\overline{\rho}_f$ and $\overline{\rho}_{A}$ respectively. Then $M_A$
and $M_f$ are $S_3$-extensions of $K$ ramified at $\lambda_2$ only. To
check (iii), we will show that there is a unique such extension. This
will force the isomorphism $M_f \simeq M_{A}$.

Let $N_A$ and $N_f$ be the respective normal closures of $M_A$ and
$M_f$ over $\Q$.  Since $f^\sigma = f \bmod \lambda_2$, $\bar{\rho}_f$ is a 
base change, so $\Gal(N_f/\Q) \simeq \Z/2\Z \times S_3$.
Similarly, we have $\Gal(N_A/\Q) \simeq \Z/2\Z \times S_3$. So each
extension comes from an $S_3$-extension of $\Q$ which is unramified
outside $\{2, 223\}$. Now we will show that there is a {\em unique}
such $S_3$-extension $N / \Q$.

First, we observe that there are exactly $7$ possible quadratic subfields, given by $\Q(\sqrt{d})$ with 
$d=-1,2,-2,223,-223,2\cdot223,-2\cdot223$. From Table \ref{table:eigenvalues-iqsqrt223}, 
we know that $\trace(\bar{\rho}_f(\Frob_\p))$ is odd for the primes above $3,17,29$. The prime $3$ is inert for
$d=-1,2,-223$ and $2\cdot223$. The prime $29$ is also inert for $d=-2$ and
$-2\cdot223$. So we deduce that the only possible quadratic subfield is
$\Q(\sqrt{223})$. 

We now turn to the cubic extension $N/ \Q(\sqrt{223})$. 
Since our representation $\overline{\rho}_f$ is unramified at $223$, this
extension cannot be ramified at the prime above $223$, so it is only ramified at $\lambda_2$. 
To see this, consider the following diagram
\[
\xymatrix@=1em{
  & N K = M_f \ar@{-}[ddr]\\
N \ar@{-}[ur]\ar@{-}[d] \\
\Q(\sqrt{223})\ar@{-}[dr] & & K=\Q(\sqrt{-223})\ar@{-}[dl]\\
& \Q & 
}
\]
and note that on the right the ramification index of $223$ is $2$. So
the same holds on the left. Again using class field theory, we see
that there is a unique such extension $N/ \Q$ given by the polynomial
$$x^6 - 2 x^5 - 29 x^4 + 90 x^3 - 58 x^2 -8 x +8.$$
So $\bar{\rho}_A \simeq \bar{\rho}_f$ and $\{\rho_A, \rho_f\}$ satisfy
all three hypotheses of Theorem~\ref{thm:faltings-serre}.

We conclude that $\rho_f$ and $\rho_A$ are isomorphic using the same recipe as in~\cite{DGP}. 
For this, we need to compare the traces of the Frobenii at the primes in $\CO_K$ that lie
above the rational primes in $\{ 3, 5, 7, 19, 29, 31 \}$. Since we already know that the traces
for these particular primes match, we are done.
\end{proof}


\bibliographystyle{amsalpha}
\bibliography{biblio}
\end{document}